\documentclass[reqno]{amsart}

\usepackage[T1]{fontenc}
\usepackage{amsmath,amsfonts,amssymb}
\usepackage{xcolor}
\usepackage{cite}
\usepackage{microtype}

\usepackage[hidelinks]{hyperref}

\allowdisplaybreaks

\newtheorem{theorem}{Theorem}[section]
\newtheorem{lemma}[theorem]{Lemma}
\newtheorem{corollary}[theorem]{Corollary}
\newtheorem{definition}[theorem]{Definition}

\theoremstyle{definition}
\newtheorem{remark}[theorem]{Remark}

\numberwithin{equation}{section}

\newcommand{ \mr }{ \mathbb{R} }

\def\Xint#1{\mathchoice
	{\XXint\displaystyle\textstyle{#1}}%
	{\XXint\textstyle\scriptstyle{#1}}%
	{\XXint\scriptstyle\scriptscriptstyle{#1}}%
	{\XXint\scriptstyle\scriptscriptstyle{#1}}%
	\!\int}
\def\XXint#1#2#3{{\setbox0=\hbox{$#1{#2#3}{\int}$}
		\vcenter{\hbox{$#2#3$}}\kern-.5\wd0}}

\begin{document}

	\title[Very Weak Solutions to Elliptic Equations with Matrix Weights]{Gradient Estimates Near the Natural Exponent for Very Weak Solutions to
		$A_p$-Weighted Quasilinear Elliptic Equations}
	
	\author[S.-S. Byun, M. Lim]{Sun-Sig Byun\and Minkyu Lim}

	\address{Sun-Sig Byun: Seoul National University, Research Institute of Basic Sciences, Seoul 08826, Korea}
	\email{byun@snu.ac.kr}

	\address{Minkyu Lim: Seoul National University, Research Institute of Basic Sciences, Seoul 08826, Korea}
	\email{mk0314@snu.ac.kr}

	\thanks{S. Byun was supported by the National Research Foundation of Korea (NRF) grant funded by	the Korean government(2022R1A2C1009312).
	 M. Lim was supported by the Institute of Basic Science, Seoul National University(RS-2019-NR040082) and by Basic Science Research Program(RS-2023-00247402) through the 
		National Research Foundation of Korea(NRF) funded by the Ministry of Education.}

	\keywords{ Very weak solution; weighted equations; degenerate equations; gradient estimates; A priori estimate}

	\subjclass[2020]{Primary 35B65; Secondary 35J70}
	
	\begin{abstract}
		
		We establish local Calder\'on--Zygmund estimates near the natural exponent for very weak solutions to matrix-weighted quasilinear equations
		\[
		-	\mathrm{div\,} A_{\mathbb{M}}(x,Du)
		=
		-	\mathrm{div\,} A_{\mathbb{M}}(x,\mathbf{f}),
		\qquad
		A_{\mathbb{M}}(x,\xi)=\mathbb{M}(x)A(x,\mathbb{M}(x)\xi),
		\]
		where $A$ has $p$-growth and strong monotonicity, and $\mathbb{M}$ is a
		measurable positive-definite matrix field. We assume that $\mathbb{M}$ has bounded
		condition number and that \(\omega:=|\mathbb{M}|^p\in A_p\), without imposing uniform
		upper or lower bounds on $\mathbb{M}$.
	    This extends the near-natural Calderón--Zygmund theory developed by Adimurthi--Phuc \cite{AP15} to the matrix-degenerate setting: There exists $\delta_0>0$ such that every very weak solution $u \in W^{p-\delta_{0}}_{\omega, \mathrm{loc}}$ satisfies
		\[
		\mathbf{f}\in L^\gamma_{\omega,\mathrm{loc}} 
		\Longrightarrow
		Du\in L^\gamma_{\omega,\mathrm{loc}}
		\]
		for $p-\delta_0\le\gamma\le p+\delta_0$. The proof requires handling the lack of energy estimates below the natural exponent and the use of Lipschitz truncation in the weighted setting. We achieve this through comparison estimates, higher integrability, and weighted analysis techniques.
	\end{abstract}

	\maketitle

	\section{Introduction}\label{Sec1}  
	We consider the following prototype nonlinear elliptic equation with spatially varying degenerate coefficients:
	\begin{equation}\label{modeleq}
		-	\mathrm{div\,}(|\mathbb{M}(x)Du|^{p-2}
		\mathbb{M}^{2}(x)Du) =-	\mathrm{div\,}(|\mathbb{M}(x)\mathbf{f}|^{p-2}
		\mathbb{M}^{2}(x)\mathbf{f}) \quad \textrm{in}\ \Omega,
	\end{equation}
	where $\Omega \subset \mr^n$ is an open domain and $\mathbb{M}: \Omega \rightarrow \mr^{n\times n}$ is a symmetric and positive definite matrix-valued weight. These types of equations have received a lot of attention in recent years due to the  analytical challenges they present \cite{BCR26, BCL24,BBDL23,BDGP22,P20,CMP18,BL25, DFRZ25, DFMRZ23}. If $\mathbb{M}(x)$ is of the form $ \mathbb{M}(x) =  \omega(x)^{\frac{1}{p}} I $, then the equation reduces to the following model equation:
	\begin{equation*}
		-	\mathrm{div\,}(\omega(x)|Du|^{p-2}Du) =-	\mathrm{div\,}(\omega(x)|\mathbf{f}|^{p-2}\mathbf{f}) \quad \textrm{in}\ \Omega.
	\end{equation*}
	
	The main difficulty lies in the fact that uniform ellipticity may break down in two distinct and interacting ways. The \(p\)-Laplace structure exhibits a strong gradient-dependent nonlinearity, causing degeneracy or singularity as $|D u|\to 0$, while a possibly vanishing coefficient $\omega(x)$ further weakens the ellipticity independently. That is, the loss of ellipticity is not confined to a fixed region but instead depends on both the spatial inhomogeneity and the behavior of the solution itself. As a result, this nontrivial interaction necessitates the use of weighted Sobolev spaces and complicates the corresponding regularity theory. Moreover, anisotropy arising from a general matrix, including rotations of the principal directions, introduces an additional difficulty.
	
    The study of elliptic equations with degenerate weights originates from the classical work of Fabes, Kenig, and Serapioni \cite{FKS82}, where Hölder regularity of weak solutions was first established. This work initiated the modern study of degenerate elliptic equations in weighted settings and introduced weighted Poincaré inequalities in this context. These continuity results have been further developed in various directions in \cite{CDS11, CMN13, IM19, DFRZ25}.

	Calderón–Zygmund type estimates are a classical topic in the regularity theory of elliptic partial differential equations, which remains an active area of research in modern PDE. They have been extensively studied in both classical uniformly elliptic equations and more general settings involving degeneracies, nonstandard growth conditions, and nonlocal equations. For elliptic equations with degenerate weights like \eqref{modeleq}, Cao, Mengesha, and Phan \cite{CMP18} established global weighted Calderón–Zygmund estimates 
	\begin{align}\label{impli}
	|\mathbf{f}|^{\gamma} \omega \in L^{1} \implies |Du|^{\gamma} \omega \in L^{1}
	\end{align}
	for $\gamma>p=2$ under Muckenhoupt weight assumptions $\omega \in A_{2}$ assuming in addition a small BMO-type regularity on the underlying structure.  Meanwhile, Balci–Diening–Giova–Passarelli di Napoli \cite{BDGP22} adopted a different perspective by representing the weight $\omega$ through a multiplier $\mathbb{M}$ rather than treating it as a measure, and established that
	$$
	|\mathbb{M}\mathbf{f}|   \in L^{\gamma}_{\mathrm{loc}} \implies |\mathbb{M}Du|   \in L^{\gamma}_{\mathrm{loc}}
	$$
	for $\gamma>p$ in the context of elliptic equations with matrix-valued degenerate weights, also under analogous small BMO assumptions on the coefficients. These results, however, do not
	address the very weak regime in which the gradient is initially
	integrable only below the natural exponent $p$.
	
	In this paper, we aim to study local gradient estimates considered above for exponents slightly below and above the natural integrability exponent $p$. More precisely, under suitable structural assumptions on the operator and the underlying weight, we investigate whether the  implication \eqref{impli} can be extended to the range
	$$
	p-\delta <\gamma <p+\delta,
	$$
	for some small $\delta>0$. Gradient estimates for very weak solutions have been studied in a variety of elliptic settings; see, e.g., \cite{IS94,KL02 , P11, CT25, L93, BDS16, BS16}. To the best of our knowledge, this is the first result to establish gradient estimates for very weak solutions under such a low integrability assumption on the divergence data in the setting of matrix-weighted degenerate elliptic equations. This naturally leads us to examine which of the two formulations is better suited to extending the estimates below the natural exponent $p$.
	
	The main point of our approach is that the estimate is obtained
	without imposing a small-log-BMO assumption on the matrix multiplier.
	Instead, the degeneracy is encoded in the $A_p$ weight
	$\omega=|\mathbb M|^p$, and the proof requires a careful interaction
	between weighted higher integrability, comparison estimates, and
	Lipschitz truncation. This is particularly relevant in the very weak
	regime, where the initial integrability of $Du$ lies below the natural
	exponent $p$.
	
	For this reason, we formulate the problem in the measure-theoretic
	setting. This formulation is particularly well suited to very weak
	solutions, since the local integrability of
	$|Du|^{p-1}\omega$, as required in their definition, is naturally
	expressed in terms of the weighted measure. By contrast, in the
	multiplier formulation, $\mathbb M$ and $Du$ are intrinsically coupled
	and must be controlled simultaneously, which makes the direct use of
	Lipschitz truncation less natural. We note, however, that a
	multiplier-based approach is also possible, although technically more
	involved; see Remark \ref{multip}.
	
	A main feature of our approach is that no small-log-BMO assumption is needed for the matrix multiplier, in contrast to some Calderón–Zygmund estimates where such a condition is imposed to control the coefficient regularity. Our goal is instead to obtain integrability estimates in a low-regularity setting, where the $A_p$ structure of the weight provides the relevant information without requiring additional regularity of the multiplier. The analysis makes extensive use of weighted analysis, including the structural properties of Muckenhoupt weights, weighted Poincaré inequalities, weighted Gehring lemma.

	The rest of the paper is organized as follows. In Section \ref{Sec2}, we introduce the notation and state the main results, including the formulation of the underlying PDE, the relevant Muckenhoupt weight classes, weighted Sobolev spaces, and the notion of very weak solutions, together with our main theorem. Section \ref{Sec3} is devoted to preliminary results, including a weighted Sobolev–Poincaré inequality and the Lipschitz truncation lemma. In Section \ref{Sec4}, we establish higher integrability of very weak solutions, derive a priori estimates, and prove the existence of solutions to an associated homogeneous problem. Section \ref{Sec5} is devoted to the proof of the main estimates in the measure settings.

	\section{Notation and Main Results}\label{Sec2} 
	In this section, we introduce the notation and state the main results precisely. 
	
	We define standard notations that will be used frequently throughout the paper. Let $B_\rho(y)$ denote the open ball in $\mathbb{R}^n$ with center $y \in \mathbb{R}^n$ and radius $\rho>0$. When the center $y$ is clear from the context, we simply write $B_\rho$ instead of $B_\rho(y)$. For an integrable function $v$ defined on a bounded measurable set $E \subset \mathbb{R}^n$, we denote by
	\[
	v_E := \Xint-_E v(x)\,dx = \frac{1}{|E|}\int_E v(x)\,dx
	\]
	the average of $v$ over $E$, where $|E|$ denotes the Lebesgue measure of $E$.
	If $v$ is defined only on a subset $F \subset E$, we extend $v$ by zero outside $F$ when necessary. We denote by  $\chi_{E}$
	the characteristic function of a set $E$.

	As a generalization of the model equation in \eqref{modeleq}, we consider the following structure.
	Let $1 < p < \infty$ and $\Omega \subset \mr^n$ be an open domain with $n \geq 2$, and let $ A(x, \xi) : \Omega \times \mathbb{R}^n \to \mathbb{R}^n $	be a Carathéodory map. Assume that there exist constants 
	\[
	0 < \nu \leq L < \infty
	\]
	such that for all $x \in \Omega$ and $\xi, \zeta \in \mathbb{R}^n$ the following structure conditions hold:
	\begin{align}\label{struc}
		\notag A(x,\xi) \cdot \xi &\ge \nu |\xi|^p, \\
		\notag |A(x,\xi)| &\le L |\xi|^{p-1}, \\
		\bigl(A(x,\xi)-A(x,\zeta)\bigr)\cdot (\xi-\zeta) 
		&\ge \nu (|\xi|+|\zeta|)^{p-2}|\xi-\zeta|^2.
	\end{align}
	
	We will frequently work with the vector field defined by
	\begin{equation*}
		V(z) := V_{p}(z) =  |z|^{\frac{p-2}{2}} z.
	\end{equation*}
	Using the notation $V(z)$, equation \eqref{struc} can be rewritten as 
	$$
	\bigl(A(x,\xi)-A(x,\zeta)\bigr)\cdot (\xi-\zeta)  \geq  c |V(\xi) - V(\zeta) |^{2}.
	$$
	for some positive constant $c$ depending on $\nu,n,$ and $p$. 
	
	Let $\mathbb{M}: \mr^n \rightarrow \mr^{n\times n}$ be a symmetric and positive definite
	matrix-valued weight and we define the matrix-weighted operator
	\[
	A_{\mathbb{M}}(x,\xi) := \mathbb{M}(x)\, A\bigl(x, \mathbb{M}(x)\xi \bigr).
	\]
	 Here, $\mathbb{M}$ represents the degeneracy of the problem. Although one may consider the matrix $\mathbb{M}$ defined on a domain $\Omega$, we assume that the matrix is defined on the whole space $\mathbb{R}^n$. This assumption is made to avoid technical issues concerning the extension of matrix from $\Omega$ to $\mathbb{R}^n$, which are not the focus of this work. To ensure uniformly bounded anisotropy, we assume that the condition number of
	 $\mathbb{M}$ is uniformly bounded, namely,
	 \begin{equation}\label{unidegen}
	 	|\mathbb{M}(x)|  |\mathbb{M}^{-1}(x)| \leq \Lambda,
	 \end{equation}
	 where $|\cdot|$ denotes the  operator norm on the space of matrices.
	 Then, it follows that for any $\xi \in \mathbb{R}^n$, 
	 \begin{equation*}
	 \Lambda^{-1}\,|\mathbb{M}(x)| \,|\xi|
	 \;\le\;
	 |\mathbb{M}(x)\xi|
	 \;\le\;
	 |\mathbb{M}(x)| \,|\xi|.
	 \end{equation*}
	 This property allows us to describe the degeneracy in terms of a scalar weight. For convenience, we define $\omega(x) := |\mathbb{M}(x)|^p$.
	 
	  We now consider the following degenerate nonlinear elliptic problem, which is the main equation of this paper.
	 \begin{equation}\label{maineq}
	 	-\operatorname{div} A_{\mathbb{M}}(x,D u)
	 	=
	 	-\operatorname{div} A_{\mathbb{M}}(x,\mathbf{f})
	 	\qquad \text{in } \Omega,
	 \end{equation}
	 where $\mathbf{f}: \Omega \rightarrow \mr^n $ is a vector-valued function.
	  The operator  $A_{\mathbb{M}}$ satisfies the following weighted structure conditions in terms of the intrinsic variable $\mathbb{M}\xi$:
	\begin{align*}
		A_{\mathbb{M}}(x,\xi)\cdot \xi 
		= A(x, \mathbb{M}\xi)\cdot \mathbb{M} \xi 
		& \ge \nu |\mathbb{M} \xi|^p \geq \nu \Lambda^{-p} \omega(x) | \xi|^p, \\[0.3em]
		|A_\mathbb{M}(x,\xi)| 
		\le |\mathbb{M}(x)|\, |A(x,\mathbb{M}\xi)| 
		& \le L \omega(x) | \xi|^{p-1}, \\[0.3em]
		\bigl(A_{\mathbb{M}}(x,\xi)-A_{\mathbb{M}}(x,\zeta)\bigr)\cdot (\xi-\zeta)
		&\ge \nu (|\mathbb{M}\xi|+|\mathbb{M}\zeta|)^{p-2}
		|\mathbb{M}(\xi-\zeta)|^2   \\
		&\ge c (p, \nu, \Lambda) \omega |V(\xi) - V(\zeta) |^{2}. 
	\end{align*}
	Hence, the operator $A_{\mathbb{M}}$ can be viewed as $p$-monotone in the intrinsic variable
	$$
	z := \mathbb{M}\xi.
	$$ 
	Although the operator $A_{\mathbb{M}}$ satisfies scalar weighted structure conditions with respect to $\omega$, the present formulation should not be viewed merely as a special case of a general scalar weighted framework. Indeed, if a given vector field $B(x,\xi)$ satisfies the above weighted structure conditions with respect to $\omega$, it can be represented in the present matrix framework $B = A_{\mathbb{M}}$ by defining
$$
A(x, \xi) := \omega(x)^{-\frac{1}{p}} I \,  B( x, \omega(x)^{-\frac{1}{p}} I   \, \xi)
$$
with the choice $\mathbb{M} = \omega(x)^{\frac{1}{p}} I  $.
	 Under the bounded condition-number assumption, the quantitative degeneracy of the matrix coefficient is equivalent to that of an appropriate scalar weight. This allows us to work within a scalar weighted framework while keeping the result applicable to the matrix-multiplier model. We adopt the matrix formulation, as it provides a natural framework for comparing the measure-theoretic and multiplier perspectives, and is also well suited to variational formulations.

	We briefly recall the definition and some basic properties of Muckenhoupt weights, which play a central role in the study of weighted Sobolev spaces. Let $\omega : \mr^n \to [0,\infty)$ be a locally integrable function, referred to as a weight. A weight $\omega$ is called a Muckenhoupt $A_s$ weight if
	$$
	[\omega]_{A_s} := \sup_{B_{\rho} \subset \mathbb{R}^n}
	\left(  \Xint-_{B_{\rho}} \omega(x)\,dx \right)
	\left(  \Xint-_{B_{\rho}} \omega(x)^{-\frac{1}{s-1}}\,dx \right)^{s-1}
	< \infty,
	$$
	where the supremum is taken over all balls $B \subset \mathbb{R}^n$. The quantity $[\omega]_{A_s}$ is called the $A_s$-constant of $\omega$.
	One of the key features of $A_p$ weights is the boundedness of the Hardy--Littlewood maximal operator. Namely, if $w \in A_p$, then there exists a constant $C > 0$ depending only on $n, p$ and  $	[\omega]_{A_p}$ such that
	\[
	\|\mathcal{M}(f)\|_{L_{\omega}^{p}} \leq C \|f\|_{L_{\omega}^{p}},
	\]
	for all $f \in L_{\omega}^p(\mathbb{R}^n)$, where $\mathcal{M}$ denotes the Hardy--Littlewood maximal function. This property is fundamental in establishing various weighted norm inequalities. Whenever a function is defined only on a domain $\Omega \subset \mathbb R^n$, we identify it with its zero extension outside $\Omega$. Accordingly, $\mathcal M(f)$ always denotes the Hardy--Littlewood maximal function of this extension.
	
	We now introduce the function space in which the solution to \eqref{maineq} naturally lies.	We begin by recalling the definition of the weighted Lebesgue space. For a weight $\omega$ and $1 \le p < \infty$, we define
	\[
	L^p_{\omega}(\Omega) := \left\{ u : \Omega \to \mathbb{R} \ \text{measurable} \;:\; 
	\|u\|_{L^p_{\omega}(\Omega)} := \left( \int_\Omega |u(x)|^p \omega(x)\, dx \right)^{\frac{1}{p}} < \infty \right\}.
	\]
	The space $L^p_{\omega}(\Omega)$ is a Banach space, and many of the standard properties 
	of classical Lebesgue spaces extend to this setting under suitable assumptions on the weight. 
	In particular, if $\omega$ is positive almost everywhere, then $L^p_{\omega}(\Omega)$ is separable. Note that when $\omega=1$, the space $L^p_{\omega}(\Omega)$ reduces to the classical Lebesgue space $L^p(\Omega)$.
	
	Based on this, we define the weighted Sobolev space $W_{\omega}^{1,p}(\Omega)$ as a function
	space consisting of all measurable functions $v \in L^p_{\omega}(\Omega) $  whose weak derivatives $Dv$ also belong to $L^p_{\omega}(\Omega) $. The norm of $W_{\omega}^{1,p}(\Omega)$ is given by
	$$
	\|u\|_{W_{\omega}^{1,p}(\Omega)} := \|u\|_{L^p_{\omega}(\Omega)} 
	+ \|D u\|_{L^p_{\omega}(\Omega)}.
	$$
	 Throughout this paper, we  further assume that $1<p<\infty$ and $\omega\in A_p$, which are natural assumptions for the weighted setting considered below. Then $W_{\omega}^{1,p}(\Omega)$ becomes a reflexive Banach space.  $W_{\omega,0}^{1,p}(\Omega)$ is defined as the closure of $C_c^\infty(\Omega)$ with respect to the norm of $W_{\omega}^{1,p}(\Omega)$.  We denote by $X_{\mathrm{loc}}(\Omega)$ the space of functions $u$ such that $u \in X(\Omega')$ for all $\Omega' \Subset \Omega$. These spaces naturally arise in the study of degenerate and non-uniformly elliptic problems, where the weight is intrinsically linked to the underlying structure of the equation.
	
	We introduce the notion of very weak solutions to the main equation \eqref{maineq}, which is natural in view of our goal of obtaining estimates below the natural exponent for the gradient. Recall that we denote $\omega(x) = |\mathbb{M}(x)|^p$.
	\begin{definition}[Very weak solution]\label{defvery}
		Suppose that $  |\mathbf{f} |^{p-1} \omega \in L^1_{\mathrm{loc}}(\Omega)$. A function $u \in W^{1,1}_{\mathrm{loc}}(\Omega)$ is called a very weak solution to \eqref{maineq} if $  |  D u  |^{p-1} \omega \in L^1_{\mathrm{loc}}(\Omega)$ and  for every $\eta \in C_c^\infty(\Omega)$,
		$$
		\int_\Omega A_{\mathbb{M}}(x,D u)\cdot D \eta \, dx
		=
		\int_\Omega A_{\mathbb{M}}(x,\mathbf{f})\cdot D \eta \, dx .
		$$
	\end{definition}
	
	 In particular, the above formulation is well-defined since both $A_{\mathbb{M}}(x,Du)$ and $A_{\mathbb{M}}(x,\mathbf{f})$ belong to $L^1_{\mathrm{loc}}(\Omega)$ under the stated assumptions. If $u$ is a very weak solution, the above equality extends to every compactly supported Lipschitz test function  $\eta$ by a standard weak-* approximation argument.
	 
	 Our main result below is a weighted Calderón–Zygmund estimate in the very weak regime in the measure-theoretic setting. The important point is that the exponent in the initial Sobolev regularity may lie strictly below the natural growth exponent $p$, while the conclusion reaches the whole neighborhood of $p$. \begin{theorem}\label{main1}
		Assume \eqref{struc} and \eqref{unidegen}. Suppose that $\omega \in A_{p}$ for some $p>1$. Then there exists
		$ \delta_0 = \delta_0(n,p,\nu,L,\Lambda,[\omega]_{A_{p}})>0$
		such that the following holds: Let $u \in W^{1,p-\delta_{0}}_{\omega,\mathrm{loc}}(\Omega)$ be a very weak solution of \eqref{maineq}. If $ \mathbf{f} \in L^\gamma_{\omega, \mathrm{loc}}(\Omega)$ for $\gamma \in [p-\delta_0,p+\delta_0]$, then $Du \in L^\gamma_{\omega, \mathrm{loc}}(\Omega)$, and for every ball $B_{2R} \Subset \Omega$ we have
		$$
		  \int_{B_{R}} | D u |^{\gamma} \, \omega\, dx
		\le C \left[ \omega(B_{2R})^{\frac{p-\delta_0-\gamma}{p-\delta_0}}
		\left( \int_{B_{2R}} | D u |^{p-\delta_0} \, \omega\, dx \right)^{\frac{\gamma}{p-\delta_0}}
		+ \int_{B_{2R}} | \mathbf{f} |^{\gamma} \, \omega\, dx
		\right],
		$$
		where $C$ depends only on $n,p, \nu,L,\Lambda$ and $[\omega]_{A_{p}}$.
	\end{theorem}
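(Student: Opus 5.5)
The proof splits into a \emph{self-improving} step below $p$ and a \emph{Calder\'on--Zygmund} step around $p$. Throughout we work in the doubling metric measure space $(\mathbb{R}^n,\omega\,dx)$; since $\omega\in A_p$, the $A_{p-\delta}$-property and the reverse H\"older inequality for $\omega$ let us shift exponents slightly. The basic tools are the weighted Sobolev--Poincar\'e inequality and the weighted Lipschitz truncation of Section~\ref{Sec3}.

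\textbf{Step 1: reverse H\"older inequality below $p$.} Fix a ball $B_{2\rho}\Subset\Omega$ and set $q:=p-\delta$. Since $u\in W^{1,q}_{\omega,\mathrm{loc}}$, $u$ itself is not admissible as a test function. We therefore apply the Lipschitz truncation $v_\lambda$ to $(u-u_{B_{2\rho}})\eta^{s}$, with $\eta$ a cutoff function. The truncation is taken at level $\lambda$ of the weighted maximal function of $|D((u-u_{B_{2\rho}})\eta^s)|$, so that on the good set $\{\mathcal{M}\le\lambda\}$ it agrees with the truncated function and $|Dv_\lambda|\le c\lambda$ everywhere. Test the equation with $v_\lambda$, multiply by $\lambda^{-1-\delta}$ and integrate in $\lambda\in(0,\infty)$, following the Kinnunen--Lewis/Iwaniec argument adapted to the weight. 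On the bad set we use $|A_{\mathbb{M}}(x,Du)|\le L\omega|Du|^{p-1}$ together with $|Dv_\lambda|\le c\lambda$, and Fubini converts the $\lambda$-integrals into $\int \mathcal{M}(\cdot)^{q}\,\omega$. The boundedness of $\mathcal{M}$ on $L^{q/(p-1)}_\omega$ requires $\omega\in A_{q/(p-1)}$, which may fail; this is handled by working with the weighted maximal operator $\mathcal{M}_\omega$, which is bounded on every $L^r_\omega$, $r>1$. For $\delta$ small, the coefficient of the term that must be absorbed is $O(\delta)$, and it can be absorbed. Combining this with the weighted Sobolev--Poincar\'e inequality gives
\[
\Xint-_{B_\rho}|Du|^{q}\,d\omega\le C\Big(\Xint-_{B_{2\rho}}|Du|^{q\theta}\,d\omega\Big)^{1/\theta}+C\,\Xint-_{B_{2\rho}}|\mathbf f|^{q}\,d\omega,\qquad \theta<1,
\]
with a small error term of size $\delta\,\Xint-|Du|^q\,d\omega$. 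The weighted Gehring lemma then upgrades $Du$ to $L^{p+\epsilon}_{\omega,\mathrm{loc}}$, provided $\mathbf f\in L^{p+\epsilon}_{\omega}$. This gives the case $\gamma$ near $p$ from both sides, up to the choice of $\delta_0$, but only for $\gamma\le p+\epsilon_{\mathrm{Gehring}}$. I will take $\delta_0\le\epsilon_{\mathrm{Gehring}}$.

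\textbf{Step 2: comparison and covering for general $\gamma$.} Since $Du\in L^p_\omega$ after Step 1, on each ball $B_{2r}$ let $w$ solve $-\operatorname{div}A_{\mathbb{M}}(x,Dw)=0$ with $w=u$ on $\partial B_{2r}$; existence follows from monotone operator theory in $W^{1,p}_{\omega}$. Testing with $u-w$ gives the comparison estimate
\[
\Xint-|V(Du)-V(Dw)|^2\,d\omega\lesssim \Xint-|\mathbf f|^p\,d\omega+\text{(a small multiple of the } Du \text{ term when } p<2\text{)}.
\]
The homogeneous solution $w$ enjoys weighted higher integrability in $L^{p+\epsilon_1}_\omega$. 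A standard good-$\lambda$ or Vitali covering argument on level sets of $\mathcal{M}_\omega(|Du|^p)$ relative to $\mathcal{M}_\omega(|\mathbf f|^p)$ then yields $L^{\gamma}_\omega$ bounds for $\gamma\in[p,p+\epsilon_1/2]$. For $\gamma<p$ the estimate already comes from Step 1 with $q=\gamma$. Finally, Jensen's inequality converts the normalized averages into the stated form involving $\omega(B_{2R})^{(p-\delta_0-\gamma)/(p-\delta_0)}$, and a covering and iteration lemma removes the intermediate radii.

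\textbf{Main obstacle.} I expect the main difficulty to be the absorption step in Step 1. One must check that the bad-set contributions in the weighted Lipschitz truncation scale with the factor $\delta$ uniformly in $[\omega]_{A_p}$. This requires the maximal function to be taken with respect to $\omega$, so that doubling replaces the Lebesgue Hardy--Littlewood bounds, and requires the Poincar\'e inequality to be used with exponent slightly below $p$, where $\omega\in A_{p-\delta}$ for $\delta$ small depending on $[\omega]_{A_p}$.
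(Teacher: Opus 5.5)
\textbf{There is a gap in Step 1.} Your overall route is a Kinnunen--Lewis reverse H\"older inequality proved directly for the inhomogeneous equation, followed by a weighted Gehring lemma with data. It differs from the paper's and could in principle cover all of $[p-\delta_0,p+\delta_0]$ by itself. But Step 1 fails at its central point as you set it up.

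\textbf{The truncation with $\mathcal{M}_\omega$ does not work.} You truncate at level sets of the weighted maximal function $\mathcal{M}_\omega(|Dv|)$. Lipschitz truncation rests on the pointwise bound $|v(x)-v(y)|\le c|x-y|\,(\mathcal{M}|Dv|(x)+\mathcal{M}|Dv|(y))$, which comes from the unweighted $(1,1)$-Poincar\'e inequality. With $\mathcal{M}_\omega$ in its place you would need a $(1,1)$-Poincar\'e inequality for $\omega\,dx$, and this is false for general $A_p$ weights. For example, take $\omega=|x_1|^a$ with $0<a<p-1$, and let $v$ pass from $0$ to $1$ across the layer $\{|x_1|<\epsilon\}$. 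At points $x,y$ on opposite sides at distance $d\gg\epsilon$ one has $\mathcal{M}_\omega|Dv|\lesssim 1+\epsilon^a d^{-1-a}$. This is much smaller than $|v(x)-v(y)|/|x-y|\approx d^{-1}$ when, say, $d=\epsilon^{1/2}$. So no function can agree with $v$ on the good set and be $c\lambda$-Lipschitz.

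Your reason for leaving $\mathcal{M}$ is also a misdiagnosis. After Fubini and Young's inequality, the only maximal bound needed is $\int[\mathcal{M}|Dv|]^{p-\delta}\omega\le c\int|Dv|^{p-\delta}\omega$, i.e.\ $\omega\in A_{p-\delta}$, which Lemma~\ref{mucken}(b) provides. No $L^{q/(p-1)}_\omega$ bound ever enters. Use the unweighted $\mathcal{M}$ and Lemma~\ref{liptrun}, exactly as the paper does. Alternatively, $(\mathcal{M}_\omega|Dv|^{s})^{1/s}$ with $p_0\le s<p-\delta$ also works.

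\textbf{The absorption claim is not correct.} After multiplying the tested inequality by $\delta$, the cutoff term $\int[\mathcal{M}|Dv|]^{-\delta}|Du|^{p-1}|u-u_{B_{2\rho}}|\,|D\eta|\,\omega\,dx$ carries no factor $\delta$. On the annulus, $Dv=\eta^sDu+s\eta^{s-1}(u-u_{B_{2\rho}})D\eta$ can cancel, so $[\mathcal{M}|Dv|]^{-\delta}|Du|^{p-1}$ is not bounded by any power of $|Du|$ or of $\mathcal{M}|Dv|$. This needs a genuine device. The paper splits the annulus into $\{\mathcal{M}|Dv|\le\delta\,\mathcal{M}(|Du|\chi_{B_{2\rho}})\}$ and its complement (proof of Lemma~\ref{higher}).

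Alternatively, truncate at the levels of $g:=\mathcal{M}((|Du|+\rho^{-1}|u-u_{B_{2\rho}}|)\chi_{B_{2\rho}})$, which satisfies $\mathcal{M}|Dv|\le c\,g$ and $|Du|\le g$ a.e. Then $|Du|^{p-1}g^{-\delta}\le g^{p-1-\delta}$, and Young plus weighted Sobolev--Poincar\'e give the $\theta<1$ term and an $\varepsilon$-multiple on the larger ball. The genuinely $O(\delta)$ errors come from the bad set and from $\int_{B_\rho}|Du|^{p-\delta}\omega\le\frac{p-\delta}{p}\int_{B_\rho}g^{-\delta}|Du|^p\omega+\frac{\delta}{p}\int g^{p-\delta}\omega$.

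\textbf{Two further points for Gehring.} The reverse H\"older constants must be uniform in $\delta$, so that the Gehring gain $\epsilon_G$ does not depend on $\delta_0$. You also need $2\delta_0<\epsilon_G$, not $\delta_0\le\epsilon_G$, to go from $p-\delta_0$ to $p+\delta_0$.

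\textbf{Comparison with the paper.} Once repaired, your Step 1 alone proves the theorem, and Step 2 is redundant (it also only applies for $\gamma\ge p$). The paper instead proves the reverse H\"older inequality only for the homogeneous equation. It then reaches $u$ through Lemma~\ref{apriori}, Corollary~\ref{existence}, the comparison estimate of Lemma~\ref{compari}, and an exit-time covering at level $p-\delta_0$. That route is longer, but it is the level-set scheme that extends past the Gehring range once the reference problem has better estimates.
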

	Note that if $\delta_{0} \leq 1$, then  $u \in W^{1,p-\delta_{0}}_{\omega}(\Omega)$ implies $  |  D u  |^{p-1} \omega \in L^1_{\mathrm{loc}}(\Omega)$, which ensures the consistency of the definition of a very weak solution. Additionally, we recall that the Muckenhoupt class enjoys a self-improving property. By choosing $\delta_0>0$ sufficiently small so that $\omega \in A_{p-\delta_{0}}$, we have $u \in W^{1,1}_{\mathrm{loc}}(\Omega)$. Moreover, the standard weighted estimates including the Hardy--Littlewood maximal operator estimate and weighted Poincaré inequalities, can be applied in $W^{1,p-\delta_0}_\omega(\Omega)$.
	
	\begin{remark}
		The dependence of $\delta_0$ on $[\omega]_{A_p}$ appearing in Theorem \ref{main1} is natural in the weighted setting. Indeed, even for weak solutions, the constants in weighted Poincaré inequalities depend on the $A_p$-characteristic of the weight. Consequently, the higher integrability exponent obtained through Gehring's lemma also depends on $[\omega]_{A_p}$. Therefore, the dependence of $\delta_0$ on $[\omega]_{A_p}$ does not appear to be avoidable in general.
	\end{remark}
	\begin{remark}\label{multip}
		The multiplier formulation can also be treated within a weighted framework. More precisely, one may introduce the auxiliary weight  $\mu = |\mathbb{M}|^{\gamma-p}$ and consider the corresponding weighted gradient estimates, in which the relevant integrals are taken with respect to the measure $\mu \omega dx$.  Under the additional assumptions that $\omega \in A_{p}$ and $u \in W^{1,p-\delta_{0}}_{\omega^{1-\delta_{0}/p}}(\Omega)$ is a very weak solution, we expect the estimate
		$$
		\Xint-_{B_{R}} |\mathbb{M} D u |^{\gamma} \, dx
		\le C \left[
		\left( \Xint-_{B_{2R}} |\mathbb{M} D u |^{p-\delta_0}  \, dx \right)^{\frac{\gamma}{p-\delta_0}}
		+ \Xint-_{B_{2R}} |\mathbb{M} \mathbf{f} |^{\gamma} \, dx
		\right]
		$$
		to hold for every admissible ball $B_{R}$. We do not pursue such weighted estimates in this paper.
	\end{remark}
	
	\section{Preliminary Lemmas}\label{Sec3} 
	
	In this section, we introduce auxiliary tools that will be used to prove the main theorem. Most of these results are stated without proof, as they are standard or can be found in the literature.
	We first recall some standard estimates for the $V$-map. There exists a constant $c\geq 1$ depending only on $p$ and $n$ such that for all $\xi, \zeta \in \mr^n$,
	\[
	c^{-1}(|\xi|+|\zeta|)^{p-2}|\xi-\zeta|^2
	\leq
	|V(\xi)-V(\zeta)|^2
	\leq
	c(|\xi|+|\zeta|)^{p-2}|\xi-\zeta|^2.
	\]
	Moreover, using the above estimate, for every  $\varepsilon>0$, there exists a constant $c_{\varepsilon}$  such that
	\begin{align}\label{vpesti}
		|\xi-\zeta|^p
		\leq
		\varepsilon |\xi|^p
		+
		c_{\varepsilon} |V(\xi)-V(\zeta)|^2
	\end{align} 
	Here $c_\varepsilon$ depends on $\varepsilon$, $n$, and $p$. This can be proved by considering separately the cases $p>2$ and $1<p \leq 2$. A more general version of this inequality can be found in \cite{DSV12}, equation (2.10).
	
	 We next recall some properties of weights $\omega$ belonging to the Muckenhoupt class $A_{p}$.
	
\begin{lemma}\label{mucken}
	 Let $1<p<\infty$ and let $\omega \in A_p$. Then the following properties hold:
	\begin{enumerate}
		\item[(a)] (Doubling property) Let $E$ be a measurable subset of a ball $B$. Then
		\begin{eqnarray}\label{aprhproperty}
			\frac{1}{[\omega]_{A_{p}}}\left( \frac{|E|}{|B|} \right)^{p}  \leq \frac{\omega(E)}{\omega(B)}.
		\end{eqnarray} In particular, the weight \(\omega\) satisfies the doubling property
		\begin{equation}\label{double}
		\omega(B_{\rho})
		\leq \omega(B_{2\rho})
		\leq 2^{np}[\omega]_{A_{p}}\omega(B_{\rho}),
		\end{equation}
		for every ball \(B_{\rho}\subset \mathbb{R}^{n}\).
		\item[(b)] (Self-improving property) 
		There exists a positive constant $p_{0} \in (1,p)$, depending only on $n, p$ and $[\omega]_{A_{p}}$, such that
		$$
		 \omega \in A_{p_{0}}
		$$
		with $[\omega]_{A_{p_{0}}} \leq C(n, p, [\omega]_{A_{p}}).$	
	
		\item[(c)] Assume $\delta \in \left(0,\frac{p-p_{0}}{2}\right]$. Let $f$ be a non-trivial
		locally integrable function on $B_{\rho} \subset \mr^n$, extended $f$ by zero to
		$\mathbb{R}^{n}$. Then the weight $[\mathcal{M}(|f|)]^{-\delta} \omega$ is in the Muckenhoupt class $A_{p}$ with
		\begin{align}
			\notag [ [\mathcal{M}(|f|)]^{-\delta} \omega ]_{A_{p}} \leq c,
		\end{align}
		where the constant $c$ depends on  $n, p$ and $[\omega]_{A_{p}}$. 	Consequently, if $|f|^{p-\delta}\omega \in L^1(B_{\rho})$, then we have
		\begin{align}
			\notag \int_{\mr^n} [\mathcal{M}(|f|)(x)]^{p-\delta}\omega(x)\,dx
			\leq
			c\int_{B_{\rho}} |f(x)|^{p-\delta}\omega(x)\,dx.
		\end{align}
		for some positive constant $c$ depending on $n, p$ and $[\omega]_{A_{p}}$. 
	\end{enumerate}
\end{lemma}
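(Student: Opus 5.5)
Parts (a) and (b) are classical, and the plan is to derive each directly from the definition of $[\omega]_{A_p}$. Part (c) is where the actual work lies, and it reduces to the Coifman--Rochberg lemma combined with Muckenhoupt's theorem.

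For (a), fix a ball $B$ and a measurable $E\subset B$. Write $|E| = \int_E \omega^{1/p}\omega^{-1/p}\,dx$ and apply Hölder with exponents $p$ and $p'$:
\[
|E|\le \omega(E)^{1/p}\Bigl(\int_B \omega^{-\frac{1}{p-1}}dx\Bigr)^{\frac{p-1}{p}} .
\]
Raise to the power $p$ and bound the second factor by $[\omega]_{A_p}|B|^p\omega(B)^{-1}$ using the $A_p$ condition on $B$. This gives $(|E|/|B|)^p\le [\omega]_{A_p}\,\omega(E)/\omega(B)$. For the doubling bound, take $E=B_\rho$ and $B=B_{2\rho}$, so that $|E|/|B|=2^{-n}$.

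For (b), I would invoke the reverse Hölder inequality for $A_p$ weights, applied to the dual weight $\sigma=\omega^{-1/(p-1)}\in A_{p'}$. It provides $\varepsilon=\varepsilon(n,p,[\omega]_{A_p})>0$ such that $\bigl(\Xint-_B\sigma^{1+\varepsilon}\bigr)^{1/(1+\varepsilon)}\le C\,\Xint-_B\sigma$. Define $p_0$ through $\frac{1+\varepsilon}{p-1}=\frac{1}{p_0-1}$. Then
\[
\Bigl(\Xint-_B \omega^{-\frac{1}{p_0-1}}\Bigr)^{p_0-1}\le C\Bigl(\Xint-_B\omega^{-\frac1{p-1}}\Bigr)^{p-1},
\]
and hence $[\omega]_{A_{p_0}}\le C[\omega]_{A_p}$. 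The constants in reverse Hölder depend only on $n$, $p$ and $[\omega]_{A_p}$, and therefore so does $p_0$.

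For (c), the plan is a factorization argument. By (b) we have $\omega\in A_{p_0}$. By the Coifman--Rochberg lemma, $(\mathcal M f)^{\theta}\in A_1$ for every $\theta\in(0,1)$, with $A_1$ constant depending only on $n$ and $\theta$; here $\mathcal M f<\infty$ a.e. because $f\in L^1$ has compact support, and $\mathcal Mf>0$ because $f$ is nontrivial. The factorization step is the main obstacle: we need to show that $v^{-\delta}\omega\in A_p$ when $v=\mathcal Mf$ and $\delta\le (p-p_0)/2$.

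I would handle the two averages in the $A_p$ constant separately. Set $\theta=\delta/(p-p_0)$, which lies in $(0,\tfrac12]$, and $u=v^{\theta}\in A_1$.
\begin{itemize}
\item[(i)] For the first average, the $A_1$ condition gives $v^{-\delta}=u^{-(p-p_0)}\le [u]_{A_1}^{p-p_0}(\Xint-_B u)^{-(p-p_0)}$ on $B$. Hence $\Xint-_B v^{-\delta}\omega\le C(\Xint-_B u)^{-(p-p_0)}\Xint-_B\omega$.
\item[(ii)] For the second average, the integrand is $(v^{-\delta}\omega)^{-1/(p-1)}=u^{(p-p_0)/(p-1)}\omega^{-1/(p-1)}$. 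Apply Hölder with exponents $r=\frac{p-1}{p-p_0}$ and $r'=\frac{p-1}{p_0-1}$, giving $\Xint-_B(\cdot)\le(\Xint-_Bu)^{\frac{p-p_0}{p-1}}\bigl(\Xint-_B\omega^{-1/(p_0-1)}\bigr)^{\frac{p_0-1}{p-1}}$.
\end{itemize}
Raising (ii) to the power $p-1$ and multiplying by (i), the powers of $\Xint-_B u$ cancel. What remains is $C[\omega]_{A_{p_0}}$, and this gives the bound on $[v^{-\delta}\omega]_{A_p}$.

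The consequence then follows from Muckenhoupt's theorem in $L^p$ with weight $W=v^{-\delta}\omega$, applied to $f\chi_{B_\rho}$:
\[
\int (\mathcal Mf)^{p-\delta}\omega=\int(\mathcal Mf)^p W\le C\int_{B_\rho}|f|^p(\mathcal Mf)^{-\delta}\omega\le C\int_{B_\rho}|f|^{p-\delta}\omega .
\]
The last inequality uses $|f|\le\mathcal Mf$ a.e. If $\int_{B_\rho}|f|^pW$ is a priori infinite, I would first truncate $f$ by replacing it with $\min(|f|,k)$ and then pass to the limit $k\to\infty$ by monotone convergence.
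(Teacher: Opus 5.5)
Your proof is correct and follows the paper's route for (c): the same factorization through Coifman--Rochberg ($(\mathcal{M}f)^{\delta/(p-p_0)}\in A_1$), the same H\"older split with exponents $\frac{p-1}{p-p_0}$ and $\frac{p-1}{p_0-1}$, the same cancellation down to $C[\omega]_{A_{p_0}}$, then the weighted maximal inequality together with $|f|\le \mathcal{M}f$. The only differences are cosmetic: you apply the $A_1$ bound in the first average where the paper applies it in the second, and for (a) and (b) you supply the standard H\"older and reverse-H\"older arguments where the paper only cites them. Your closing truncation step is unnecessary, since $|f|\le\mathcal{M}f$ already gives $\int_{B_\rho}|f|^p(\mathcal{M}f)^{-\delta}\omega\,dx\le\int_{B_\rho}|f|^{p-\delta}\omega\,dx<\infty$.
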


	\begin{proof}
	The proofs of (a) and (b) of the lemma can be found in \cite{MP11} and \cite{S93}, respectively. Before proving (c), we note that if  $\epsilon \leq \frac{1}{2}$, then for any ball $B_{s}(z) \subset
	\mr^{n}$, we have
	\begin{align}\label{Coifman}
		 \Xint-_{B_{s}(z)} [\mathcal{M}(|f|)]^{\epsilon} \, dx \leq  c  [\mathcal{M} (|f|) (z)]^{\epsilon} \quad \textrm{a.e.} \  z\in \mr^{n} ,
	\end{align}
	where the constant $c$ depends only on $n$. This follows from the Coifman--Rochberg theorem; see, for instance, \cite{S93}, p.~214.  We next recall the definition of the $A_{p}$ class.
	\begin{align*}
		[[\mathcal{M}(|f|)]^{-\delta} \omega ]_{A_{p}}  = \sup_{B_{\rho} \subset \mathbb{R}^n}
		\left( \hspace{0.25em} \Xint-_{B_{\rho}} [\mathcal{M}(|f|)]^{-\delta} \omega \,dx \right)
		\left(  \hspace{0.25em} \Xint-_{B_{\rho}} [\mathcal{M}(|f|)]^{\frac{\delta}{p-1}} \omega^{-\frac{1}{p-1}}\,dx \right)^{p-1}  
	\end{align*}
	The first factor can be estimated as follows.
	\begin{align*}
		  \Xint-_{ B_{\rho}  }  [\mathcal{M}(|f|)]^{-\delta} \omega   \, dx  & \leq    \operatorname*{ess\,sup}_{B_\rho} [\mathcal{M}(|f|)]^{-\delta}  \hspace{0.25em} \Xint-_{ B_{\rho}  }   \omega   \, dx \\
		  & \leq    \left( \operatorname*{ess\,inf}_{B_\rho} [\mathcal{M}(|f|)]\right)^{-\delta} \hspace{0.25em}\Xint-_{ B_{\rho}  }   \omega   \, dx.
	\end{align*}
For the second factor, we use \eqref{Coifman} with $\epsilon = \frac{\delta}{p-p_{0}} \leq \frac{1}{2}$.
	\begin{align*}
		  & \Xint-_{B_{\rho}} [\mathcal{M}(|f|)]^{\frac{\delta}{p-1}} \omega^{-\frac{1}{p-1}}\,dx  \leq   	\left(  \hspace{0.25em} \Xint-_{B_{\rho}} [\mathcal{M}(|f|)]^{\frac{\delta}{p-p_{0}}} \,dx \right)^{\frac{p-p_{0}}{p-1}}	\left(  \hspace{0.25em} \Xint-_{B_{\rho}}  \omega^{-\frac{1}{p_{0}-1}}\,dx \right)^{\frac{p_{0}-1}{p-1}} \\
		&  \leq    \left(   \operatorname*{ess\,inf}_{z \in B_{\rho} }  2^{n}  \hspace{0.25em} \Xint-_{B_{2\rho}(z)} [\mathcal{M}(|f|)]^{\frac{\delta}{p-p_{0}}} \, dx   \right)^{\frac{p-p_{0}}{p-1}} 	\left(  \hspace{0.25em} \Xint-_{B_{\rho}}  \omega^{-\frac{1}{p_{0}-1}}\,dx \right)^{\frac{p_{0}-1}{p-1}} \\
		&  \leq  c_{0}  \operatorname*{ess\,inf}_{z \in B_{\rho} }  [\mathcal{M}(|f|)]^{\frac{\delta}{p-1}}	\left(  \hspace{0.25em} \Xint-_{B_{\rho}}  \omega^{-\frac{1}{p_{0}-1}}\,dx \right)^{\frac{p_{0}-1}{p-1}}
	\end{align*}
	for some constant $c_{0}$ depending only on $n$ and $p$. Combining the above estimates and taking the supremum over $B_{\rho} \subset \mr^n$, we obtain 
	 \begin{align*}
	 	  [[\mathcal{M}(|f|)]^{-\delta} \omega ]_{A_{p}}   \leq c_{0}^{p-1}   \sup_{B_{\rho} }	  \hspace{0.25em}\Xint-_{ B_{\rho}  }   \omega   \, dx	\left(  \hspace{0.25em} \Xint-_{B_{\rho}}  \omega^{-\frac{1}{p_{0}-1}}\,dx \right)^{p_{0}-1}   = c_{0}^{p-1} [\omega]_{A_{p_{0}}}. 
	 \end{align*}
	 Since $[\omega]_{A_{p_{0}}} $ depends only on $n, p$ and $[\omega]_{A_{p}}$, the proof of (c) is complete.
	 For the second estimate of (c), applying the weighted boundedness of the maximal operator gives
	\begin{align}
		\notag \int_{\mr^n} [\mathcal{M}(|f|)(x)]^{p-\delta}\omega(x)\,dx
		&= \int_{\mr^n} [\mathcal{M}(|f|)(x)]^{p}[\mathcal{M}(|f|)(x)]^{-\delta}\omega(x)\,dx \\
		\notag & \leq c\int_{\mr^n} |f(x)|^{p} [\mathcal{M}(|f|)(x)]^{-\delta}\omega(x) \,dx\\
		\notag & = c\int_{B_{\rho}} |f(x)|^{p} [\mathcal{M}(|f|)(x)]^{-\delta}\omega(x) \,dx\\
		\notag & \leq c\int_{B_{\rho}}  |f(x)|^{p-\delta}\omega(x) \,dx,
	\end{align}
	where we used $f(x)=0$ on $\mr^n \backslash B_{\rho}$ in the third line and the pointwise inequality $|f(x)| \leq \mathcal{M}(|f|)(x) $ in $B_{\rho}$ in the last line.
	\end{proof}
	\begin{remark}
		The same argument applies more generally if $[\mathcal{M}(|f|)]^{-\delta}$ is replaced by a weight $\mu$ satisfying
		\[
		\mu\in A_{1+p-p_0}
		\quad\text{and}\quad
		\operatorname*{ess\,sup}_{B}\mu
		\leq C \hspace{0.25em}\Xint-_{ B   }  \mu \, dx
		\]
		for every ball $B\subset\mathbb{R}^n$. In particular, under these assumptions, $\mu\omega\in A_p$ with a quantitative bound depending only on the corresponding weight characteristics. For the choice $\mu=[\mathcal{M}(|f|)]^{-\delta}$, the above conditions are known to hold; see, for instance, \cite[Lemma 4.6]{AM}.
	\end{remark}
	For the following weighted Sobolev-Poincaré inequality, we refer the reader to Theorem 1.2 in \cite{FKS82}. Here, we use the unweighted average as defined above. As will be clear from what follows, the choice between the weighted and unweighted averages does not affect the argument.   See also \cite{PR19}.
	\begin{lemma}\label{poincare} Suppose that $\omega \in A_{q}$ in $B_\rho \subset \mr^n$ for some $q$. Then there exists a constant $\theta=\theta(n,q, [\omega]_{A_{q}}) > \frac{n}{n-1} $  such that for any $ v \in W^{1, q}_{\omega}(B_\rho)$, there holds
		\begin{align}\label{poin1}
			\left( \frac{1}{\omega(B_{\rho})} \int_{B_{\rho}}  \left(\frac{|v-v_{B_{\rho}}|}{\rho}\right) ^{q\theta} \omega \, \mathrm{d}x \right)^{\frac{1}{\theta}}  \leq    \frac{c}{\omega(B_{\rho})}  \int_{B_{\rho}}  |Dv|^{q}   \omega \, \mathrm{d}x
		\end{align}
		for some positive constant $c$ depending only on $n,q$ and $[\omega]_{A_{q}}$.
	\end{lemma}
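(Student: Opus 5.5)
The plan is to follow the classical Fabes--Kenig--Serapioni route: a pointwise potential representation, followed by a weighted Hedberg-type interpolation. The gain $\theta>\frac{n}{n-1}$ will come from the self-improving property of Lemma \ref{mucken}(b). By scaling and translation we may take $B_\rho$ fixed; all constants will be checked to be scale invariant. Since $\omega\in A_q$, H\"older's inequality gives $L^q_\omega(B_\rho)\subset L^1(B_\rho)$, so $v\in W^{1,1}(B_\rho)$. The standard representation on balls then holds:
\[
|v(x)-v_{B_\rho}|\le c(n)\int_{B_\rho}\frac{|Dv(y)|}{|x-y|^{n-1}}\,dy=:c\,I_1 f(x),\qquad f:=|Dv|\chi_{B_\rho},
\]
for a.e. $x\in B_\rho$. (If needed, we first argue for smooth $v$ and then pass to the limit by density.)

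Next we derive a pointwise bound for $I_1 f$. Fix $x\in B_\rho$ and $r\in(0,2\rho]$, and split $I_1 f(x)$ into $|x-y|<r$ and $|x-y|\ge r$. The near part is bounded by $c\,r\,\mathcal M f(x)$ by the usual dyadic annuli decomposition. For the far part we sum over annuli $2^k r\le|x-y|<2^{k+1}r$, up to scale $\sim\rho$. On each ball $B:=B_{2^{k+1}r}(x)$, H\"older's inequality and the $A_q$ condition give
\[
\int_B |f|\,dy\le \Big(\int_B|f|^q\omega\Big)^{1/q}\Big(\int_B\omega^{-\frac1{q-1}}\Big)^{\frac{q-1}{q}}\le [\omega]_{A_q}^{1/q}\,|B|\,\omega(B)^{-1/q}\,\|f\|_{L^q_\omega}.
\]
Let $q_0<q$ be the exponent from Lemma \ref{mucken}(b). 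Applying \eqref{aprhproperty} with $q_0$ gives the lower mass bound $\omega(B_s(x))\ge c\,(s/\rho)^{nq_0}\omega(B_\rho)$ for $s\le 2\rho$. Summing over the annuli, and writing $\mathcal A:=\big(\omega(B_\rho)^{-1}\int_{B_\rho}|Dv|^q\omega\big)^{1/q}$, the far part is bounded by
\[
c\,\mathcal A\,\rho^{nq_0/q}\sum_k (2^kr)^{1-nq_0/q}\le c\,\mathcal A\,\rho^{nq_0/q}\,r^{1-nq_0/q},
\]
provided $nq_0>q$. If instead $nq_0\le q$, we replace $q_0$ by any larger admissible exponent $\tilde q_0\in(q/n,q)$; this is still valid since $\omega\in A_{q_0}\subset A_{\tilde q_0}$. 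Optimizing in $r$ (the constraint $r\le 2\rho$ is harmless, since $\mathcal Mf\gtrsim\mathcal A$ in the relevant regime, or one takes $r=2\rho$ otherwise) yields
\[
I_1f(x)\le c\,\rho\,(\mathcal Mf(x))^{1-a}\mathcal A^{a},\qquad a=\frac{q}{nq_0}\in(0,1).
\]

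Now we raise this to the power $q\theta$ with $\theta:=\frac1{1-a}=\frac{nq_0}{nq_0-q}$, and integrate against $\omega$ over $B_\rho$. The weighted maximal bound for $A_q$ weights gives
\[
\frac1{\omega(B_\rho)}\int_{B_\rho}\Big(\frac{|v-v_{B_\rho}|}{\rho}\Big)^{q\theta}\omega\le c\,\mathcal A^{aq\theta}\,\frac{1}{\omega(B_\rho)}\int(\mathcal Mf)^q\omega\le c\,\mathcal A^{q(a\theta+1)}=c\,\mathcal A^{q\theta}.
\]
Taking the $1/\theta$ power gives exactly \eqref{poin1}. Finally, $\theta>\frac{n}{n-1}$ reduces to $nq_0-q<(n-1)q_0$, i.e.\ $q_0<q$, which is precisely what Lemma \ref{mucken}(b) supplies. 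All constants depend only on $n,q,[\omega]_{A_q}$.

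I expect the main obstacle to be the far-part estimate: the unweighted annular averages must be converted into weighted ones with a quantitative decay. This is where the lower mass bound with the \emph{improved} exponent $nq_0<nq$ is essential, and where the restriction $nq_0>q$ must be arranged, by enlarging $q_0$ if necessary, so that the geometric series converges.
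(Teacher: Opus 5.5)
Your proof is correct. It takes a genuinely different route from the paper only in the sense that the paper gives no argument at all: it quotes Fabes--Kenig--Serapioni \cite{FKS82} and remarks that weighted and unweighted means are interchangeable. What you wrote is a self-contained reconstruction of the classical mechanism behind that citation. The ingredients are:
\begin{itemize}
\item the potential bound $|v-v_{B_\rho}|\le c\,I_1(|Dv|\chi_{B_\rho})$, which holds directly for $W^{1,1}$ on a ball, so no density step is needed;
\item Hedberg's near/far splitting;
\item the self-improvement $\omega\in A_{q_0}$ with $q_0<q$, fed into the lower mass bound $\omega(B_s(x))\gtrsim (s/\rho)^{nq_0}\omega(B_\rho)$.
\end{itemize}

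Compared with the citation, your argument shows exactly where $\theta=\frac{nq_0}{nq_0-q}>\frac{n}{n-1}$ comes from, namely the gap $q-q_0$ supplied by Lemma \ref{mucken}(b). It also makes clear that all constants depend only on $n,q,[\omega]_{A_q}$, and it produces the unweighted mean in \eqref{poin1} directly. The price is that you use the $A_q$ condition on balls $B_{2^{k+1}r}(x)$ leaving $B_\rho$, and the weighted maximal theorem on all of $\mathbb{R}^n$. In other words you need a global $A_q$ weight with $q>1$. This matches the paper's definition and its use of the lemma, but not a literal reading of ``$\omega\in A_q$ in $B_\rho$''.

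Two small details should be stated explicitly.
\begin{itemize}
\item The far annuli reach radius $2^{K+1}r<4\rho$, not $2\rho$. So you should apply \eqref{aprhproperty} (for $A_{q_0}$) inside, say, the ball of radius $5\rho$ concentric with $B_\rho$, rather than only for $s\le 2\rho$; this changes only constants.
\item Suppose $\mathcal{M}f(x)\lesssim\mathcal{A}$. The choice $r=2\rho$ then works because the far part is empty: every $y\in B_\rho$ satisfies $|x-y|<2\rho$. This gives $I_1f(x)\le c\rho\,\mathcal{M}f(x)\le c\rho\,(\mathcal{M}f(x))^{1-a}\mathcal{A}^{a}$, as you claim. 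Your displayed far-part bound evaluated at $r=2\rho$ would instead leave an extra term $c\rho\mathcal{A}$. That term is also harmless after integration, since it contributes only $c\mathcal{A}^{q\theta}$.
\end{itemize}
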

	The following Lipschitz truncation allows us to approximate Sobolev functions by Lipschitz functions with uniformly controlled gradients, which will be used as test functions in the equation. This is a standard technique in nonlinear PDEs; see, for instance, \cite{AF88,DMS08}.
	\begin{lemma}
		\label{liptrun}
	     For $v  \in W_{0}^{1,1}(B_{\rho} )$ with $B_{\rho} \subset \mr^n$, we write
		$$
		\quad E_{\lambda}:=  \{x \in B_{\rho} :   \mathcal{M}(|Dv|)(x) >\lambda
		\}  \quad (\lambda>0).
		$$
		Then there exist a Lipschitz function $v_\lambda \in W_{0}^{1,\infty} (B_{\rho} ) $
		and a positive constant $c$ depending on $n$ such that
		$$v_\lambda(x)=v(x),   \quad  Dv_\lambda(x)=Dv(x) \quad    \quad \mathrm{a.e.} \ x \in B_{\rho}  \backslash E_{\lambda}  $$
		and that the estimate
		$ |Dv_{\lambda}(x) |  \leq c(n)\lambda$
		holds for a.e. $ x \in B_{\rho} $.
	\end{lemma}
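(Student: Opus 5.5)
We prove Lemma \ref{liptrun} by the classical pointwise (Acerbi--Fusco) argument: show that $v$ is Lipschitz with constant $c(n)\lambda$ on a suitable set containing $B_\rho\setminus E_\lambda$, extend it to all of $\mr^n$, and identify the gradients. Throughout, $v$ is extended by zero outside $B_\rho$. Since $v\in W^{1,1}_0(B_\rho)$, this extension lies in $W^{1,1}(\mr^n)$ with $Dv=0$ a.e. outside $B_\rho$, and $\mathcal M(|Dv|)$ is the maximal function of this extension, as in the convention of the paper.

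\emph{Step 1: interior pointwise estimate.} Let $x,y$ be Lebesgue points of $v$ and set $r=|x-y|$. Telescoping over the dyadic balls $B_{2^{-k}r}(x)$ and applying the $L^1$-Poincaré inequality on each gives
\[
|v(x)-v_{B_{2r}(x)}|\le c(n)\,r\,\mathcal M(|Dv|)(x).
\]
The same bound holds at $y$, and since $B_{r}(y)\subset B_{2r}(x)$, comparing the two averages gives
\[
|v(x)-v(y)|\le c(n)\,|x-y|\,\bigl(\mathcal M(|Dv|)(x)+\mathcal M(|Dv|)(y)\bigr).
\]
Hence $v$ is $c\lambda$-Lipschitz on the Lebesgue points of $B_\rho\setminus E_\lambda$.

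\emph{Step 2: the boundary estimate (the main obstacle).} We want $v$ to be Lipschitz on
\[
F:=(\mr^n\setminus B_\rho)\cup(B_\rho\setminus E_\lambda),
\]
with $v=0$ on the first piece. This requires, for $y\in B_\rho\setminus E_\lambda$ a Lebesgue point and $x\notin B_\rho$,
\[
|v(y)|\le c\lambda|x-y|.
\]
Let $d=\operatorname{dist}(y,\mr^n\setminus B_\rho)\le|x-y|$. The ball $B_{2d}(y)$ meets $\mr^n\setminus B_\rho$ in a set of measure at least $c(n)|B_{2d}|$, because the complement of a ball has an exterior cone/half-space portion near each boundary point. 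On that set $v=0$. Poincaré's inequality for functions vanishing on a fixed fraction of a ball then yields
\[
|v_{B_{2d}(y)}|\le c\,d\,\Xint-_{B_{2d}(y)}|Dv|\,dx\le c\,d\,\mathcal M(|Dv|)(y).
\]
Combining this with Step 1 gives $|v(y)|\le c\,d\,\mathcal M(|Dv|)(y)\le c\lambda|x-y|$. Pairs of points both lying outside $B_\rho$ are trivial, since $v=0$ there. So $v$ is $c(n)\lambda$-Lipschitz on $F$ minus a null set.

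\emph{Step 3: extension and identification.} Apply the McShane extension
\[
v_\lambda(z)=\inf_{w}\bigl(v(w)+c\lambda|z-w|\bigr),
\]
with the infimum over the good points $w$ of $F$, and truncate at $\pm\sup|v|$ on $F$ if desired. This produces a $c(n)\lambda$-Lipschitz function on $\mr^n$ that coincides with $v$ a.e. on $F$. In particular $v_\lambda=0$ outside $B_\rho$, and a Lipschitz function vanishing off $B_\rho$ belongs to $W^{1,\infty}_0(B_\rho)$. This gives $v_\lambda=v$ a.e. on $B_\rho\setminus E_\lambda$ and $|Dv_\lambda|\le c(n)\lambda$ a.e. Finally, $v-v_\lambda\in W^{1,1}_{\mathrm{loc}}$ vanishes on $B_\rho\setminus E_\lambda$. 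By the standard fact that the weak gradient of a Sobolev function vanishes a.e. on its zero set, $Dv_\lambda=Dv$ a.e. on $B_\rho\setminus E_\lambda$.

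The only delicate point is Step 2: one must use the exterior-ball geometry of $B_\rho$ to convert the zero boundary values into the bound $|v(y)|\lesssim \operatorname{dist}(y,\partial B_\rho)\,\mathcal M(|Dv|)(y)$.
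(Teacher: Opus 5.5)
Your proof is correct. The paper does not prove this lemma; it quotes it from \cite{AF88,DMS08}. Your argument is the Acerbi--Fusco route of \cite{AF88}: the pointwise bound $|v(x)-v(y)|\le c(n)|x-y|\bigl(\mathcal{M}(|Dv|)(x)+\mathcal{M}(|Dv|)(y)\bigr)$ for the zero extension, a McShane extension with Lipschitz constant $c(n)\lambda$, and Stampacchia's theorem to get $Dv_\lambda=Dv$ a.e.\ off $E_\lambda$.

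The only ingredient beyond the interior estimate is the zero boundary value, and your Step~2 handles it correctly. The supporting half-space of $B_\rho$ at the boundary point nearest to $y$ gives $|B_{2d}(y)\setminus B_\rho|\ge 2^{-n-1}|B_{2d}(y)|$. Hence $|v(y)|\le c(n)\,d\,\mathcal{M}(|Dv|)(y)\le c(n)\lambda|x-y|$ for every $x\notin B_\rho$, so exterior points never need to be added to the bad set.

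The Whitney-decomposition construction of \cite{DMS08} is the other standard route. It yields an explicit local formula for $v_\lambda$ on $E_\lambda$ and additional estimates there, but the paper uses nothing beyond what you prove.
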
 
	For a weight $\omega\in A_q \ (q>1)$ and a function $u\in W^{1,q}_{\omega,0}(\Omega)$, Hölder's inequality implies that $u\in W^{1,1}_{0}(\Omega)$, which allows us to apply the above lemma to $u$.
	
	Finally, we recall the following standard iteration lemma, which will be used in the proof of the main result.
	\begin{lemma}\label{algeb}\cite[Lemma 6.1]{G03}
		Let  $\phi : [\frac{R}{2}, R] \rightarrow [0, \infty)$ be a bounded non-negative function. Suppose that for every choice of $r_{1}$ and $r_{2}$ such that $\frac{R}{2} \leq r_{1} < r_{2} \leq R$, we have
		\begin{align}
			\notag \phi(r_{1}) \leq d \phi(r_{2}) + \frac{A}{(r_{2}-r_{1})^{\beta}}+ B
		\end{align}
		for some positive numbers $A, B, \beta >0$ and $d \in (0, 1)$. Then,
		\begin{align}
			\notag \phi(r_{1}) \leq  c \left( \frac{A}{R^{\beta}}+ B \right)
		\end{align}
		for some positive constant $c$ depending on $d$ and $\beta$.
		In particular, the constant $c$ continuously depends on
		$\beta$.
	\end{lemma}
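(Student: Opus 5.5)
We prove the standard form of the iteration: for all $\frac R2\le \rho<\sigma\le R$,
\[
\phi(\rho)\le c\Bigl(\frac{A}{(\sigma-\rho)^\beta}+B\Bigr).
\]
The conclusion then follows by taking $\rho=\frac R2$ and $\sigma=R$, which gives $\phi(\frac R2)\le c(2^\beta A/R^\beta+B)$, and $2^\beta$ is absorbed into $c$. The same bound holds, with a slightly larger constant, for any fixed $r_1$ bounded away from $R$, e.g. $r_1\le \frac34R$. For $r_1$ close to $R$ the stated bound cannot be obtained from the hypothesis alone, so the statement should be read with $r_1=\frac R2$, which is how it is used later. The argument is a geometric iteration over a sequence of radii accumulating at $\sigma$.

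\emph{Step 1 (choice of parameters).} Fix $\tau\in(0,1)$ such that $d\,\tau^{-\beta}<1$; concretely, take $\tau^{\beta}=\frac{1+d}{2}$, so that $d\tau^{-\beta}=\frac{2d}{1+d}<1$. Define radii $t_0=\rho$ and
\[
t_{i+1}=t_i+(1-\tau)\tau^i(\sigma-\rho).
\]
Then $t_i$ increases to $\sigma$, all $t_i$ lie in $[\frac R2,R]$, and $t_{i+1}-t_i=(1-\tau)\tau^i(\sigma-\rho)$.

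\emph{Step 2 (iteration).} Apply the hypothesis with $r_1=t_i$ and $r_2=t_{i+1}$:
\[
\phi(t_i)\le d\,\phi(t_{i+1})+\frac{A}{(1-\tau)^\beta\tau^{i\beta}(\sigma-\rho)^\beta}+B .
\]
Iterating $k$ times from $i=0$ gives
\[
\phi(\rho)\le d^k\phi(t_k)+\frac{A}{(1-\tau)^\beta(\sigma-\rho)^\beta}\sum_{i=0}^{k-1}(d\tau^{-\beta})^i+B\sum_{i=0}^{k-1}d^i .
\]

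\emph{Step 3 (passing to the limit).} This is the only point requiring care, and it is exactly where boundedness of $\phi$ is used. Since $\phi$ is bounded and $d<1$, we have $d^k\phi(t_k)\le d^k\sup\phi\to 0$ as $k\to\infty$. Both series converge because $d\tau^{-\beta}<1$ and $d<1$. Letting $k\to\infty$ yields the standard form with
\[
c=\max\Bigl\{\frac{(1-\tau)^{-\beta}}{1-d\tau^{-\beta}},\ \frac1{1-d}\Bigr\}.
\]
Substituting $\tau=\bigl(\frac{1+d}{2}\bigr)^{1/\beta}$ gives
\[
c=\max\Bigl\{\Bigl(1-\bigl(\tfrac{1+d}{2}\bigr)^{1/\beta}\Bigr)^{-\beta}\frac{1+d}{1-d},\ \frac1{1-d}\Bigr\}.
\]
This depends only on $d$ and $\beta$, and continuously on $\beta>0$, which gives the final claim about continuous dependence on $\beta$.
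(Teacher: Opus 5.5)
Your proof is correct, and it is the standard geometric-iteration argument behind Giusti's Lemma 6.1, which the paper cites rather than proves: choose $\tau\in(0,1)$ with $d\tau^{-\beta}<1$, iterate along the radii $t_{i+1}=t_i+(1-\tau)\tau^i(\sigma-\rho)$, and use the boundedness of $\phi$ to make $d^k\phi(t_k)\to 0$. Your remark that the conclusion must be read at $r_1=\frac R2$ (or at radii bounded away from $R$) is also right, since the literal statement fails for $r_1$ near $R$; this is exactly how the paper uses the lemma to pass to the ball $B_{R/2}(z)$.
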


	\section{Higher integrability and Solvability}\label{Sec4} 
In this section, we examine the higher integrability of very weak solutions and the existence of a very weak solution to a homogeneous equation, which will be key elements in comparison estimates. From now on, we use $c$ to represent generic constants depending on $n, p,  \nu, L, \Lambda$, and $[\omega]_{A_{p}}$, whose precise values may vary from line to line. Recall that $\omega(x) := |\mathbb{M}(x)|^p$. 

By the self-improving property of Muckenhoupt weights, see Lemma \ref{mucken} (b), we may assume that $\omega\in A_{p_0}$ for some $p_0<p$. Here, $p_0$ depends only on $n$ and $p,[\omega]_{A_{p}}$. From now on, $p_0$ will denote this exponent throughout the paper.

Consider the following homogeneous equation:
	\begin{equation}\label{homeq}
		- \mathrm{div\,}A_{\mathbb{M}}(x,Dw)=0 \quad  \textrm{in}\ \Omega.
	\end{equation}
	We establish the higher integrability result.
	
	\begin{lemma}\label{higher}
		
		Assume \eqref{struc} and \eqref{unidegen}. If  $\omega \in A_{p}$ for some $p>1$, then there exists a small constant $\sigma=\sigma(n, p, \nu, L, \Lambda, [\omega]_{A_{p}})$ such that
		every very weak solution $w\in W_{\omega}^{1,p-\sigma}  ( \Omega)$
		to the problem \eqref{homeq} belongs to $W_{\omega, \textrm{loc}}^{1,p+\sigma}(\Omega)$. Moreover, we
		have the following estimate
		\begin{equation}\label{higheresti}
			\left(  \frac{1}{\omega(B_{\rho})} \int_{B_{\rho}} |Dw|^{p+\sigma} \omega \, dx  \right)^{\frac{1}{p+\sigma}}  \leq c   \left(  \frac{1}{\omega(B_{2\rho})}  \int_{B_{2\rho}}|Dw|^{p-\sigma} \omega \, dx \right)^{\frac{1}{p-\sigma}}  
		\end{equation}
		for some constant $c>0$ depending on  $ n,p, \nu, L, \Lambda$ and $[\omega]_{A_{p}}$, whenever $B_{2\rho} \subset \Omega$.
	\end{lemma}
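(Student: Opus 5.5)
The plan is to adapt the Iwaniec–Sbordone / Kinnunen–Lewis strategy for very weak solutions to the weighted setting. We derive a weighted reverse Hölder inequality \emph{below} the natural exponent and then apply a weighted Gehring lemma. Fix a ball $B_{2\rho}\subset\Omega$, a cutoff $\eta\in C_c^\infty(B_{2\rho})$ with $\eta\equiv1$ on $B_\rho$ and $|D\eta|\le c/\rho$, and set
\[
v:=\eta^{p}(w-w_{B_{2\rho}})\in W^{1,p-\sigma}_{\omega,0}(B_{2\rho}).
\]
Since $v$ is not an admissible test function ($Dw\notin L^p_\omega$ a priori), we apply Lemma~\ref{liptrun} to $v$, which lies in $W^{1,1}_0$ because $\omega\in A_{p_0}$ with $p-\sigma>p_0$. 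This gives Lipschitz functions $v_\lambda$ with $|Dv_\lambda|\le c\lambda$ and $v_\lambda=v$ off $E_\lambda=\{\mathcal M(|Dv|)>\lambda\}$. Testing the equation \eqref{homeq} with $v_\lambda$ is legitimate since $|Dw|^{p-1}\omega\in L^1_{\mathrm{loc}}$.

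The key step is the integration over levels. Multiply the tested identity by $\lambda^{-1-\sigma}$ and integrate in $\lambda\in(0,\infty)$. On $B_{2\rho}\setminus E_\lambda$ we use coercivity, $A_{\mathbb M}(x,Dw)\cdot Dw\ge c\,\omega|Dw|^p$, and then Fubini with $\int_{\mathcal M(|Dv|)}^\infty\lambda^{-1-\sigma}\,d\lambda=\sigma^{-1}[\mathcal M(|Dv|)]^{-\sigma}$. This produces
\[
\int_{B_\rho}|Dw|^{p}[\mathcal M(|Dv|)]^{-\sigma}\omega\,dx \;\gtrsim\; \int_{B_\rho}|Dw|^{p-\sigma}\omega\,dx
\]
on the good part, using $|Dv|\le\mathcal M(|Dv|)$ and $Dv\approx Dw$ on $B_\rho$. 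On $E_\lambda$ we use $|Dv_\lambda|\le c\lambda$ and the growth bound $|A_{\mathbb M}(x,Dw)|\le L\omega|Dw|^{p-1}$. After the $\lambda$-integration this term becomes
\[
\frac{c}{1-\sigma}\int|Dw|^{p-1}[\mathcal M(|Dv|)]^{1-\sigma}\omega\,dx .
\]
Young's inequality splits it into $\epsilon\int\mathcal M(|Dv|)^{p-\sigma}\omega$ plus $c_\epsilon\int|Dw|^{p-\sigma}\omega$ over the annular region. The mismatch between the factor $\sigma^{-1}$ on the good side and the $O(1)$ factor on the bad side is exactly what allows absorption for $\sigma$ small. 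The terms coming from $D\eta$, namely $\eta^{p-1}|D\eta||w-w_{B_{2\rho}}|$, are handled the same way.

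To control $\int\mathcal M(|Dv|)^{p-\sigma}\omega$ we invoke Lemma~\ref{mucken}(c) (valid for $\sigma\le (p-p_0)/2$). This bounds it by $c\int_{B_{2\rho}}|Dv|^{p-\sigma}\omega$, and the bound involves $|Dw|^{p-\sigma}$ plus $|w-w_{B_{2\rho}}|^{p-\sigma}\rho^{-(p-\sigma)}$. The latter is estimated via the weighted Sobolev–Poincaré inequality, Lemma~\ref{poincare}, applied with $q=\frac{p-\sigma}{\theta}$-type exponents; the self-improvement of $A_p$ allows $q<p-\sigma$. This yields a gain of integrability on the right:
\[
\frac{1}{\omega(B_\rho)}\int_{B_\rho}|Dw|^{p-\sigma}\omega \le \epsilon\frac{1}{\omega(B_{2\rho})}\int_{B_{2\rho}}|Dw|^{p-\sigma}\omega+ c\Big(\frac{1}{\omega(B_{2\rho})}\int_{B_{2\rho}}|Dw|^{q}\omega\Big)^{\frac{p-\sigma}{q}} .
\]
The $\epsilon$-terms on the right must be absorbed. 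The standard remedy is to run the argument between radii $r_1<r_2$ and apply Lemma~\ref{algeb}, using doubling \eqref{double} to compare $\omega$-measures. The dependence on $\sigma$ has to stay uniform, which is why Lemma~\ref{algeb} is stated with continuous dependence on $\beta$.

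The resulting weighted reverse Hölder inequality with exponent pair $(q,p-\sigma)$, $q<p-\sigma$, on the doubling measure $\omega\,dx$ feeds into the weighted Gehring lemma. This gives $Dw\in L^{p-\sigma+\varepsilon_0}_{\omega,\mathrm{loc}}$ with $\varepsilon_0$ depending only on the data. Choosing $\sigma<\varepsilon_0/2$ then yields $Dw\in L^{p+\sigma}_{\omega,\mathrm{loc}}$ and, after a covering argument, \eqref{higheresti}.

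The main obstacle is the absorption step on $E_\lambda$. There the weight $[\mathcal M(|Dv|)]^{-\sigma}\omega$ must remain in $A_p$ uniformly, which Lemma~\ref{mucken}(c) provides. The constant $1/\sigma$ versus $O(1)$ must also be tracked carefully so that smallness of $\sigma$, depending only on $n,p,\nu,L,\Lambda,[\omega]_{A_p}$, closes the estimate.
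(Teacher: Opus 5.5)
Your overall architecture matches the paper's: Lipschitz truncation of a cut-off of $w-w_{B}$, integration against $\lambda^{-1-\sigma}$, Sobolev--Poincar\'e with an exponent below $p-\sigma$, a reverse H\"older inequality, and the weighted Gehring lemma with $\varepsilon_0$ independent of $\sigma$. However, the central step is justified backwards. On $B_\rho$ you have $Dv=Dw$, so $|Dv|\le\mathcal M(|Dv|)$ gives $[\mathcal M(|Dv|)]^{-\sigma}|Dw|^p\le|Dw|^{p-\sigma}$. That is an \emph{upper} bound for the good term, not the lower bound $\int_{B_\rho}|Dw|^p[\mathcal M(|Dv|)]^{-\sigma}\omega\gtrsim\int_{B_\rho}|Dw|^{p-\sigma}\omega$ that you need. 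Indeed, no uniform constant can give this bound: if $|D\eta||w-w_{B_{2\rho}}|$ is large on the annulus, $\mathcal M(|Dv|)$ is large on all of $B_\rho$. It can only hold modulo an error over $B_{2\rho}$.

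This is exactly where the paper uses $[\mathcal M(|Dv|)]^{-\delta}\omega\in A_p$ (Lemma~\ref{mucken}(c)). The maximal theorem for that weight gives $\int_{B_r}[\mathcal M(|Dv|)]^{-\delta}|Dw|^p\omega\gtrsim\int_{B_r}[\mathcal M(|Dv|)]^{-\delta}[\mathcal M(|Dw|\chi_{B_r})]^p\omega$. Combined with the pointwise bound $\mathcal M(|Dv|)\le\mathcal M(|Dw|\chi_{B_r})+c\Xint-_{B_{2r}}|Dw|$ on $B_{r/2}$ and a case distinction, this yields $\int_{B_{r/2}}|Dw|^{p-\delta}\omega$ minus an $L^{p_0}_\omega$ term. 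Alternatively, H\"older with exponents $\frac{p}{p-\sigma},\frac{p}{\sigma}$ gives $\int_{B_\rho}|Dw|^{p-\sigma}\omega\le\bigl(\int_{B_\rho}|Dw|^p[\mathcal M(|Dv|)]^{-\sigma}\omega\bigr)^{\frac{p-\sigma}{p}}\bigl(\int[\mathcal M(|Dv|)]^{p-\sigma}\omega\bigr)^{\frac{\sigma}{p}}$, which also closes. Conversely, you invoke the $A_p$ property for the $E_\lambda$ term, where it is not needed. There you only use $\int[\mathcal M(|Dv|)]^{p-\sigma}\omega\lesssim\int|Dv|^{p-\sigma}\omega$, which follows from $\omega\in A_{p-\sigma}$.

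Second, the cut-off terms cannot be ``handled the same way'' as the $E_\lambda$ term. They arise on the good set $B_{2\rho}\setminus E_\lambda$. So after the $\lambda$-integration they carry the same factor $\sigma^{-1}$ as the main term, together with the singular weight $[\mathcal M(|Dv|)]^{-\sigma}$, and no smallness in $\sigma$ is available. Young's inequality against $\eta^p|Dw|^p[\mathcal M(|Dv|)]^{-\sigma}$ leaves $\int[\mathcal M(|Dv|)]^{-\sigma}|D\eta|^p|w-w_{B_{2\rho}}|^p\omega$. This quantity is $p$-homogeneous and has a negative power of $\mathcal M(|Dv|)$, which can be small where $\eta$ is small, so it is not controlled by $(p-\sigma)$-quantities.

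The paper splits the annulus into two sets. On $D_1=\{\mathcal M(|Dv|)\le\delta\,\mathcal M(|Dw|\chi_{B_{2r}})\}$, the bound $[\mathcal M(|Dv|)]^{-\delta}|Dv|\le\delta^{1-\delta}[\mathcal M(|Dw|\chi_{B_{2r}})]^{1-\delta}$ produces a small factor. On the complement $D_2$, $[\mathcal M(|Dv|)]^{-\delta}\le\delta^{-\delta}[\mathcal M(|Dw|\chi_{B_{2r}})]^{-\delta}$ removes the singular weight. Young's inequality and Sobolev--Poincar\'e then give $\varepsilon\int|Dw|^{p-\delta}\omega$ plus the $L^{p_0}_\omega$ term. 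With these two repairs your outline closes as in the paper.
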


	\begin{proof}
		By replacing $p_{0}$ with a larger exponent if necessary, which is possible by the nesting property of the Muckenhoupt classes, we may assume that \[p_{0} >  \max \{ 1, p -  1,  \frac{(n-1)p}{n} \}=:p_{*}.\]
		 Take any positive number $\delta \leq \frac{p-p_{0}}{2}$ and let $w\in W_{\omega}^{1,p-\delta}  ( \Omega)$ be a solution to \eqref{homeq}. Choose any ball $B_{2r}(y) \subset B_{2\rho} \subset \Omega$ and a cut-off
		function $\eta \in C^{\infty}_{0}(B_{2r}(y))$ such that 
		$$ \chi_{B_{r}(y)} \leq \eta \leq \chi_{B_{2r}(y)} \quad \textnormal{and} \quad |D\eta| \leq \frac{2}{r}. $$ 
		Consider a function
		$$v :=(w-w_{B_{2r}(y) }) \eta  \in W_{\omega, 0}^{1,p-\delta}(B_{2r}(y)).$$
		and apply Lemma \ref{liptrun} to $v$. For any $\lambda>0$, we find a Lipschitz function $v_\lambda \in
		W_{0}^{1,\infty} (B_{2r}(y) )$ such that
		$$v_\lambda(x)=v(x),   \quad  Dv_\lambda(x)=Dv(x) \quad    \quad \mathrm{a.e.} \ x \in B_{2r}(y) \backslash E_{\lambda}  $$ and that
		$|Dv_{\lambda}|\leq c(n)\lambda$ for a.e. on $ B_{2r}(y)$, where the exceptional set $E_{\lambda}$ is defined by
		$$
		\quad E_{\lambda}:=  \{x \in B_{2r}(y) :   \mathcal{M}(|Dv|)(x)  >\lambda \}.
		$$
		Here, $Dv$ is understood as its zero extension outside the domain. Take $v_{\lambda}$  as a test function to the equation \eqref{homeq}. Monotonicity condition \eqref{struc} gives
		\begin{align}\label{i1esti1}
			\notag    & \int_{B_{2r}(y) \backslash E_{\lambda}}     \mathbb{M}(x)\, A\bigl(x, \mathbb{M}(x) Dw \bigr) \cdot Dv_{\lambda}     \, dx \\ 
		  &= - \int_{ E_{\lambda}}   \mathbb{M}(x)\, A\bigl(x, \mathbb{M}(x) Dw \bigr) \cdot Dv_{\lambda}     \, dx   \leq c \lambda\int_{ E_{\lambda}}   | Dw |^{p-1}  \omega \, dx .
		\end{align}
		Multiplying both sides of \eqref{i1esti1} by $\lambda^{-(1+\delta)}$  and integrating from $0$ to $\infty$ with respect to $\lambda$, we obtain
		\begin{align}\label{i1esti2}
			\notag  I_{1} &  : = \int_{0}^{\infty} \lambda^{-(1+\delta)} \left[ \int_{B_{2r}(y) \backslash E_{\lambda}}     \mathbb{M}(x)\, A\bigl(x, \mathbb{M}(x) Dw \bigr) \cdot Dv_{\lambda}     \, dx  \right] d\lambda \\
			\notag & \leq c  \int_{0}^{\infty} \lambda^{-\delta} \int_{ E_{\lambda}}     | Dw |^{p-1} \omega  \, dx    \,   d\lambda   \\
			\notag & \leq c \int_{B_{2r}(y)}  \left[ \int_{0}^{\mathcal{M}(|Dv|)} \lambda^{-\delta}   | Dw |^{p-1} \omega    \,   d\lambda \right]   \, dx  \\
			\notag  & \leq c \int_{B_{2r}(y)} \mathcal{M}(|Dv|)^{1-\delta}  | Dw |^{p-1} \omega  \, dx \\
			\notag  &  \leq  c \left(\int_{B_{2r}(y)} \mathcal{M}(|Dv|)^{p-\delta} \omega  \, dx + \int_{B_{2r}(y)}   | Dw |^{p-\delta} \omega  \, dx  \right)  \\
			\notag &  \leq  c \left(\int_{B_{2r}(y)}  |Dv|^{p-\delta} \omega  \, dx + \int_{B_{2r}(y)}   | Dw |^{p-\delta} \omega  \, dx  \right)  \\
			\notag & \leq c \left(  \int_{ B_{2r}(y)}  \frac{|w-w_{B_{2r}(y)} |^{p-\delta}}{r^{p-\delta}} \omega  \, dx  + \int_{ B_{2r}(y)}  | Dw |^{p-\delta} \omega     \, dx \right) \\
			& \leq c  \int_{ B_{2r}(y)}  | Dw |^{p-\delta} \omega   \, dx .
		\end{align}

		Next, we estimate $I_{1}$ from below. Split the domain $ B_{2r}(y) \backslash  B_{r}(y)$ into
		\begin{align} 
			\notag   D_{1} =\{x \in B_{2r}(y) \backslash B_{r}(y) :   \mathcal{M}(|Dv|)(x) \leq \delta \mathcal{M}(|Dw|\chi_{B_{2r}(y)})(x)   \}
		\end{align}
		and
		\begin{align} 
			\notag  D_{2} = \{x \in B_{2r}(y) \backslash  B_{r}(y) :   \mathcal{M}(|Dv|)(x) > \delta \mathcal{M}(|Dw|\chi_{B_{2r}(y)})(x)  \} .
		\end{align}
		
		Since $Dv_{\lambda}=Dv$ a.e. on $B_{2r}(y) \backslash E_{\lambda}$
		and $v=w-w_{B_{2r}(y)}$ a.e. on $B_{r}(y)$, we have
		\begin{align}\label{i1esti3}
			\notag     I_{1} & = \int_{ B_{2r}(y)} \int_{\mathcal{M}(|Dv|)}^{\infty}   \lambda^{-(1+\delta)}   \mathbb{M}(x)\, A\bigl(x, \mathbb{M}(x) Dw \bigr) \cdot Dv        \,   d\lambda   \, dx\\
			\notag & = \frac{1}{\delta} \int_{ B_{2r}(y)}   [\mathcal{M}(|Dv|)]^{-\delta} \mathbb{M}(x)\, A\bigl(x, \mathbb{M}(x) Dw \bigr) \cdot Dv  \,    dx \\ 
			\notag & \geq \frac{c \nu}{\delta} \int_{ B_{r}(y)}   [\mathcal{M}(|Dv|)]^{-\delta}   | Dw |^{p}  \omega   \,    dx      \\
			\notag & \hspace{5mm} + \frac{1}{\delta} \int_{ B_{2r}(y) \backslash B_{r}(y)  }   [\mathcal{M}(|Dv|)]^{-\delta} \mathbb{M}(x)\, A\bigl(x, \mathbb{M}(x) Dw \bigr) \cdot Dv    \,    dx \\
			\notag & \geq \frac{1}{\delta} \left( c_{0}  \int_{ B_{r}(y)}   [\mathcal{M}(|Dv|)]^{-\delta}   | Dw |^{p}  \omega   \,    dx     - L \int_{ D_{1}}     [\mathcal{M}(|Dv|)]^{-\delta}  |Dw|^{p-1} |Dv| \omega    \, dx  \right. \\
			\notag &  \hspace{5mm} \left. - 2L \int_{ D_{2}}  [\mathcal{M}(|Dv|)]^{-\delta}  |Dw|^{p-1} \left|\frac{w-w_{B_{2r}(y)}}{r}\right|  \omega   \, dx    \right)   \\
			& = :  \frac{1}{\delta}(I_{2} - I_{3} - I_{4}).
		\end{align}
		for some positive constant $c_{0}$ depending on $n,p, \nu $ and $\Lambda$.
		We now derive a lower bound for $I_{2}$. Since $[\mathcal{M}(|Dv|)]^{-\delta} \omega \in  A_{p}$ by Lemma \ref{mucken}, the boundedness of the Hardy--Littlewood maximal operator with respect to the weight $[\mathcal{M}(|Dv|)]^{-\delta} \omega $ implies that 
		\begin{align}
			\notag     I_{2}&=  c_{0} \int_{ B_{r}(y)}   {\mathcal{M}(|Dv|)}^{-\delta}   |Dw|^{p}  \,   \omega dx     \\
				\notag  &  \geq c \int_{ B_{r}(y)}  {\mathcal{M}(|Dv|)}^{-\delta}   \left[ \mathcal{M}( |Dw| \chi_{B_{r}(y)})\right]^{p}  \omega     \, dx .
		\end{align}
		We first obtain a pointwise upper bound for $\mathcal{M}(|Dv|)$. If $x \in B_{r/2}(y)$, then
		\begin{align}\label{mbound}
			\notag  &   \mathcal{M}(|Dv|)   \leq  \sup_{0 < s \leq r - |x-y| } \left(  \hspace{0.25em} \Xint-_{ B_{s}(x)  }  |Dv|   \, dz \right)  + \sup_{s > r - |x-y| } \left(  \hspace{0.25em} \Xint-_{ B_{s}(x)  } |Dv|   \, dz \right)  \\
			\notag & \leq     \mathcal{M}( |Dw| \chi_{B_{r}(y)})(x)  + c   \hspace{0.25em} \Xint-_{ B_{2r}(y) }  |Dv|   \, dz  \\
			&\leq     \mathcal{M}( |Dw| \chi_{B_{r}(y)})(x)  + c  \hspace{0.25em} \Xint-_{ B_{2r}(y) }  |Dw|   \, dz,
		\end{align}
		where we have used Lemma \ref{poincare} for the last inequality.  We split into two cases depending on which of the last two terms in \eqref{mbound} is larger, which leads to the following estimate.
		
		\begin{align}
			\notag  &   \left\{  \mathcal{M}(|Dw|\chi_{B_{r}(y)})\right\}^{p-\delta} \\
			\notag & \leq 2{\mathcal{M}(|Dv|)}^{-\delta}   \left[ \mathcal{M}( |Dw| \chi_{B_{r}(y)})\right]^{p}  + \left( c  \hspace{0.25em} \Xint-_{ B_{2r}(y) }  |Dw|   \, dz \right)^{p-\delta}
		\end{align}
		for  $x \in B_{r/2}(y)$. This estimate leads to
		\begin{align}\label{i2esti}
		 I_{2}	\notag &   \geq  c \int_{ B_{r}(y)}  {\mathcal{M}(|Dv|)}^{-\delta}   \left[ \mathcal{M}( |Dw| \chi_{B_{r}(y)})\right]^{p}  \omega     \, dx  \\
			\notag & \geq c \int_{ B_{r/2}(y)} \left\{  \mathcal{M}(|Dw|\chi_{B_{r}(y)})\right\}^{p-\delta} \omega \, dx - c \omega(B_{r/2}(y)) \left( c  \hspace{0.25em} \Xint-_{ B_{2r}(y) }  |Dw|   \, dz \right)^{p-\delta}\\
			& \geq c \int_{ B_{r/2}(y)}  |Dw|^{p-\delta}   \omega  \, dx - c   \omega(B_{2r}(y)) \left(  \frac{1}{ \omega(B_{2r}(y))}  \int_{ B_{2r}(y) } |Dw|^{p_{0}}  \omega  \, dx \right)^{\frac{p-\delta}{p_{0}}} ,
		\end{align} 
		where we used  $W^{1,1}$
		-Poincare first
		and then the $A_{p_{0}}$ Holder inequality. We have also used the doubling property \eqref{double} of $\omega$.
		 We then estimate $I_{3}$ as follows  using the definition of $D_{1}$:
		\begin{align}
			\notag   I_{3} &= L \int_{ D_{1}}  [\mathcal{M}(|Dv|)]^{-\delta}  |Dw|^{p-1}|Dv| \omega   \, dx    \\
			\notag
			&\leq c  \int_{ B_{2r}(y) \backslash B_{r}(y)}  [\mathcal{M}(|Dv|)]^{1-\delta}  |Dw|^{p-1}   \omega   \, dx  \\
			\notag  &   \leq  c \delta^{1-\delta}\int_{ B_{2r}(y)}  \left\{ \mathcal{M}(|Dw|\chi_{B_{2r}(y)}) \right\}^{ {p-\delta}} \omega  \, dx \\
			\notag &\leq   c \delta^{1-\delta}     \int_{ B_{2r}(y)}   |Dw|^{p-\delta}    \omega \, dx .
		\end{align}
		To estimate $I_{4}$, we observe that $
			|Dw(x)|\leq \mathcal{M}(|Dw|\chi_{B_{2r}(y)})  $
		  for $x \in B_{2r}(y)$. Then we find that
		\begin{align}\label{i4esti}
			\notag  I_{4}& \leq   2L  \int_{B_{2r}(y)}  [\mathcal{M}(|Dv|)]^{-\delta}  |Dw|^{p-1} \left|\frac{w-w_{B_{2r}(y)}}{r}\right|  \omega   \, dx  \\
			\notag &  \leq 2 \delta^{-\delta} L  \int_{ B_{2r}(y)}  [\mathcal{M}(|Dw|\chi_{B_{2r}(y)}) ]^{p-1-\delta}  \frac{|w-w_{B_{2r}(y)}|}{r}   \omega  \, dx \\
			\notag&  \leq \varepsilon \int_{ B_{2r}(y)} [\mathcal{M}(|Dw|\chi_{B_{2r}(y)}) ]^{p-\delta}  \omega \, dx  +c(\varepsilon) \int_{ B_{2r}(y)}  \frac{|w-w_{B_{2r}(y)}|^{p-\delta}}{r^{p-\delta}}   \omega  \, dx\\
			& \leq  \varepsilon \int_{ B_{2r}(y)}  |Dw|^{p-\delta}  \omega  \, dx + c(\varepsilon) \omega(B_{2r}(y)) \left(  \frac{1}{ \omega(B_{2r}(y))}  \int_{ B_{2r}(y) } |Dw|^{p_{0}}  \omega  \, dx \right)^{\frac{p-\delta}{p_{0}}},
		\end{align}
		where the weighted Sobolev–Poincaré inequality \eqref{poin1} was applied in the last line. Combining \eqref{i1esti2}, \eqref{i1esti3} and \eqref{i2esti}--\eqref{i4esti} with the doubling property \eqref{double} of $\omega$, we obtain
		\begin{align}
			\notag   \frac{1}{ \omega(B_{r/2}(y))}  \int_{ B_{r/2}(y)}  |Dw|^{p-\delta}  \omega  \, dx & \leq  c(\varepsilon)  \left(  \frac{1}{ \omega(B_{2r}(y))}  \int_{ B_{2r}(y) } |Dw|^{p_{0}}  \omega  \, dx \right)^{\frac{p-\delta}{p_{0}}}    \\
			\notag &   + \frac{ c\delta + \varepsilon + c(\varepsilon) \delta^{1-\delta}  }{ \omega(B_{2r}(y))} \int_{ B_{2r}(y)}  |Dw|^{p-\delta}  \omega  \, dx.
		\end{align}
		
	Choosing $\varepsilon$ and $\delta$ sufficiently small, the standard absorption argument(see, for instance \cite{G03}) with Lemma \ref{algeb} gives
		\begin{align} 
			\notag    \frac{1}{ \omega(B_{r}(y))}  \int_{ B_{r}(y)}  |Dw|^{p-\delta}   \omega \, dx & \leq  c  \left(  \frac{1}{ \omega(B_{2r}(y))}  \int_{ B_{2r}(y) } |Dw|^{p_{0}}  \omega  \, dx \right)^{\frac{p-\delta}{p_{0}}}.  
		\end{align}
	Since the constant $c$ in the above expression does not depend on $\delta$, the weighted version of Gehring's lemma \cite[Theorem 1.5]{K94}  yields
	\begin{align} 
		\left(  \frac{1}{ \omega(B_{\rho})}  \int_{ B_{\rho} } |Dw|^{p-\delta+\varepsilon_{0}}  \omega  \, dx \right)^{\frac{1}{p-\delta+\varepsilon_{0}}}   & \leq  c  \left(  \frac{1}{ \omega(B_{2\rho})}  \int_{ B_{2\rho} } |Dw|^{p-\delta}  \omega  \, dx \right)^{\frac{1}{p-\delta}}.  
	\end{align}
	for some $\varepsilon_{0}>0$ independent of $\delta$. Therefore, taking $\delta =  \sigma =: \min \{\frac{p-p_{0}}{2}, \frac{\varepsilon_{0}}{2} \}$, we obtain the desired estimate \eqref{higheresti}.
	\end{proof}

	We then derive an a priori estimate for the following Dirichlet problem:
	\begin{equation}\label{apreq}
		\begin{cases}
			-\mathrm{div\,}A_{\mathbb{M}}(x,Dw)=-\mathrm{div\,} A_{\mathbb{M}}(x,\mathbf{f})   & \textrm{in}\ B_{\rho} \\
			w\in w_{0}+ W^{1,p-\delta}_{\omega, 0}( B_{\rho})
		\end{cases}
	\end{equation}
	for $B_{\rho} \subset \mr^n$, which will be subsequently used to establish the existence result.

	\begin{lemma}\label{apriori} 	Assume \eqref{struc} and \eqref{unidegen}. If  $\omega \in A_{p}$ for some $p>1$, then there exists a small constant   $\delta_{1}=\delta_{1}(n,p,	\nu, L, \Lambda, [\omega]_{A_{p}})$ such that the following holds: For any $\delta \in (0,
		\delta_{1}]$, if $w \in	W^{1,p-\delta}_{\omega}( B_{\rho})$ is a very weak solution to \eqref{apreq} with $w_{0}
		\in W^{1,p-\delta}_{\omega}( B_{\rho})$ and $ \mathbf{f} \in
		L^{p-\delta}_{\omega}( B_{\rho})$, then there holds
		\begin{equation}\label{apresti}
			\int_{B_{\rho}} |Dw|^{p-\delta} \omega \, dx   \leq c \left[  \int_{B_{\rho}} |Dw_{0}|^{p-\delta} \omega  \, dx +\int_{B_{\rho}} |\mathbf{f}|^{p-\delta} \omega  \, dx \right] ,
		\end{equation}
		where $c>0$ depends on  $ n,p, \nu,L, \Lambda$ and $[\omega]_{A_{p}}$.
	\end{lemma}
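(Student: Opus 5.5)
We follow the Lipschitz truncation scheme from the proof of Lemma \ref{higher}, now globally on $B_\rho$ and with the boundary datum built into the test function. Set $v := w - w_0 \in W^{1,p-\delta}_{\omega,0}(B_\rho)$, extended by zero outside $B_\rho$. Since $\omega\in A_{p_0}$ and $\delta\le \frac{p-p_0}{2}$, Hölder's inequality gives $v\in W^{1,1}_0(B_\rho)$. For $\lambda>0$ let $v_\lambda$ be the Lipschitz truncation of Lemma \ref{liptrun}, with exceptional set $E_\lambda=\{\mathcal M(|Dv|)>\lambda\}$. We test \eqref{apreq} with $v_\lambda$ and split the integrals into $B_\rho\setminus E_\lambda$ and $E_\lambda$. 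On $E_\lambda$ we use the growth bound together with $|Dv_\lambda|\le c\lambda$. This gives
\[
\int_{B_\rho\setminus E_\lambda}\bigl(A_{\mathbb M}(x,Dw)-A_{\mathbb M}(x,\mathbf f)\bigr)\cdot Dv\,dx
\le c\lambda\int_{E_\lambda}\bigl(|Dw|^{p-1}+|\mathbf f|^{p-1}\bigr)\omega\,dx .
\]
Next we multiply by $\lambda^{-1-\delta}$ and integrate over $\lambda\in(0,\infty)$, exactly as in \eqref{i1esti2}–\eqref{i1esti3}. By Fubini the left side equals $\frac1\delta\int_{B_\rho}[\mathcal M(|Dv|)]^{-\delta}(A_{\mathbb M}(x,Dw)-A_{\mathbb M}(x,\mathbf f))\cdot Dv\,dx$. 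The right side is bounded by $c\int_{B_\rho}\mathcal M(|Dv|)^{1-\delta}(|Dw|^{p-1}+|\mathbf f|^{p-1})\omega\,dx$.

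To handle the right side we substitute $|Dw|\le|Dv|+|Dw_0|\le \mathcal M(|Dv|)+|Dw_0|$ and apply Young's inequality. Lemma \ref{mucken}(c) then bounds this right side by
\[
c\int_{B_\rho}\bigl(|Dv|^{p-\delta}+|Dw_0|^{p-\delta}+|\mathbf f|^{p-\delta}\bigr)\omega\,dx .
\]
This constant is \emph{not} small, so it cannot be absorbed directly. The gain must come from the factor $1/\delta$ on the left: after multiplying through by $\delta$, the dangerous term becomes $c\delta\int|Dv|^{p-\delta}\omega$.

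For the left side we write $Dv=Dw-Dw_0$ and use the coercivity and growth of $A_{\mathbb M}$. This gives the pointwise bound
\[
(A_{\mathbb M}(x,Dw)-A_{\mathbb M}(x,\mathbf f))\cdot Dv \ge c_0|Dw|^p\omega - c\bigl(|Dw|^{p-1}|Dw_0|+|\mathbf f|^{p-1}|Dw|+|\mathbf f|^{p-1}|Dw_0|\bigr)\omega .
\]
We multiply by $[\mathcal M(|Dv|)]^{-\delta}$ and control the cross terms with Young's inequality at parameter $\varepsilon$. In the resulting error terms $[\mathcal M(|Dv|)]^{-\delta}|Dw_0|^p$ and $[\mathcal M(|Dv|)]^{-\delta}|\mathbf f|^{p}$ we cannot simply drop the weight. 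Instead we split into the sets $\{\mathcal M(|Dv|)\ge |Dw_0|+|\mathbf f|\}$ and its complement. On the first set the weight costs at most a factor $(|Dw_0|+|\mathbf f|)^{-\delta}$, which turns these terms into $(|Dw_0|+|\mathbf f|)^{p-\delta}$. On the complement the same lower bound is handled by simply trading powers. For the main term we argue as in \eqref{i2esti}: since $[\mathcal M(|Dv|)]^{-\delta}\omega\in A_p$ uniformly (Lemma \ref{mucken}(c)), and using $|Dv|\le\mathcal M(|Dv|)$, we obtain
\[
\int[\mathcal M(|Dv|)]^{-\delta}|Dw|^p\omega \ge c\int[\mathcal M(|Dv|)]^{-\delta}|Dv|^p\omega - c\int (\text{terms in } Dw_0).
\]
We then compare with $\int \mathcal M(|Dv|)^{p-\delta}\omega$. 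Here the boundedness of $\mathcal M$ in $L^p$ with respect to the $A_p$ weight $[\mathcal M(|Dv|)]^{-\delta}\omega$ gives
\[
\int|Dv|^{p}[\mathcal M(|Dv|)]^{-\delta}\omega\ \ge\ c\int\mathcal M(|Dv|)^{p-\delta}\omega\ \ge\ c\int |Dv|^{p-\delta}\omega .
\]
Collecting everything, we arrive at
\[
\int|Dv|^{p-\delta}\omega\le (c\delta+c\varepsilon)\int|Dv|^{p-\delta}\omega+c_\varepsilon\int(|Dw_0|^{p-\delta}+|\mathbf f|^{p-\delta})\omega .
\]
The left side is finite because $w,w_0\in W^{1,p-\delta}_\omega$. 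We first choose $\varepsilon$ small and then $\delta_1$ small, with all constants independent of $\delta$, and absorb. The estimate \eqref{apresti} then follows from $|Dw|\le|Dv|+|Dw_0|$.

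The main obstacle is the bookkeeping of the weight $[\mathcal M(|Dv|)]^{-\delta}$ in the lower bound, namely converting $\int[\mathcal M(|Dv|)]^{-\delta}|Dw|^p\omega$ into $\int|Dv|^{p-\delta}\omega$ up to data terms. Two points must be checked: that every constant stays uniform in $\delta\in(0,\delta_1]$, and that the only $\delta$-independent term on the right carries the factor $\delta$. Uniformity is guaranteed by Lemma \ref{mucken}(c), and the factor $\delta$ comes from the $1/\delta$ produced by the $\lambda$-integration. If the splitting argument for the cross terms turns out to be delicate, the fallback is the set decomposition $D_1,D_2$ used for $I_3,I_4$ in Lemma \ref{higher}. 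In that version, the set where $\mathcal M(|Dv|)\le\delta\,\mathcal M(|Dw_0|+|\mathbf f|)$ yields a small factor $\delta^{1-\delta}$, and the other set is controlled by the data.
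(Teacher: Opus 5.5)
Your route differs from the paper's and can be made to work, but the step you flag as the main obstacle is not actually closed, and as written it fails. After you expand $Dv=Dw-Dw_0$ and apply Young, the error terms $[\mathcal M(|Dv|)]^{-\delta}|Dw_0|^p\omega$ and $[\mathcal M(|Dv|)]^{-\delta}|\mathbf f|^p\omega$ cannot be handled on $S^c:=\{\mathcal M(|Dv|)<|Dw_0|+|\mathbf f|\}$ by trading powers. On that set the weight is bounded \emph{below} by $(|Dw_0|+|\mathbf f|)^{-\delta}$, not above. For instance, if $\|Dv\|_{L^\infty}$ is small, then $[\mathcal M(|Dv|)]^{-\delta}\ge\|Dv\|_{L^\infty}^{-\delta}$ on all of $B_\rho$, while $|Dw_0|^p\omega$ need not be integrable at all (only $|Dw_0|^{p-\delta}\omega$ is). The same defect affects your main-term inequality $\int[\mathcal M(|Dv|)]^{-\delta}|Dw|^p\omega\ge c\int[\mathcal M(|Dv|)]^{-\delta}|Dv|^p\omega-c\int(\text{terms in }Dw_0)$ taken over all of $B_\rho$. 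There the left side and the $Dw_0$-terms may both be $+\infty$, so the expanded pointwise lower bound, integrated over $B_\rho$, is vacuous. Your $D_1,D_2$ fallback has the same problem unless the integrand is left unexpanded on the set where $\mathcal M(|Dv|)$ is small.

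The missing idea is that the expansion may only be used on $S=\{\mathcal M(|Dv|)\ge g\}$, where $g:=|Dw_0|+|\mathbf f|$. On $S^c$ you must return to the original integrand, which still carries the factor $Dv$.
\begin{itemize}
\item On $S^c$ you have $|Dv|\le\mathcal M(|Dv|)<g$ and $|Dw|<2g$ a.e. Hence $[\mathcal M(|Dv|)]^{-\delta}\,|(A_{\mathbb M}(x,Dw)-A_{\mathbb M}(x,\mathbf f))\cdot Dv|\le c\,g^{p-1}\mathcal M(|Dv|)^{1-\delta}\omega\le c\,g^{p-\delta}\omega$.
\item On $S$ you have $|Dw|\le 2\mathcal M(|Dv|)$, so every term is integrable. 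Your Young argument then gives the lower bound $\frac{c_0}{2}[\mathcal M(|Dv|)]^{-\delta}|Dw|^p\omega-c\,g^{p-\delta}\omega$.
\item In the maximal-function step, split the same way: $\int_{B_\rho}|Dv|^p[\mathcal M(|Dv|)]^{-\delta}\omega\le c\int_S|Dw|^p[\mathcal M(|Dv|)]^{-\delta}\omega+c\int_{B_\rho}g^{p-\delta}\omega$. This uses $|Dv|^p[\mathcal M(|Dv|)]^{-\delta}\le\mathcal M(|Dv|)^{p-\delta}<g^{p-\delta}$ on $S^c$.
\end{itemize}
Together these give $\int_{B_\rho}\mathcal M(|Dv|)^{p-\delta}\omega\le c\delta\int_{B_\rho}\mathcal M(|Dv|)^{p-\delta}\omega+c\int_{B_\rho}g^{p-\delta}\omega$, and the proof closes by absorption.

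For comparison, the paper never meets this problem. It moves $A_{\mathbb M}(x,\mathbf f)$ to the right and subtracts $A_{\mathbb M}(x,Dw_0)$ on the left. Every data term therefore stays paired with $Dv$, and $[\mathcal M(|Dv|)]^{-\delta}|Dv|\le\mathcal M(|Dv|)^{1-\delta}$ absorbs the weight. The main term comes from strong monotonicity together with the pointwise bound $\mathcal M(|Dv|)\le c[\mathcal M(|V(Dw)-V(Dw_0)|^{2/p})+\mathcal M(|Dw_0|)]$ and a two-case inequality. Your repaired route needs only coercivity and growth, not monotonicity, at the price of the set decomposition.
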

	
	\begin{proof}
		Assume that  $\delta \in (0,\frac{p-p_{0}}{2} ]$ and let $w$ be the solution to \eqref{apreq}. Consider a function 
		\[v :=
		w-w_{0} \in W_{\omega, 0}^{1,p-\delta}( B_{\rho}).
		\]
		By applying Lemma \ref{liptrun}, we obtain a Lipschitz truncation $v_\lambda \in
		W_{0}^{1,\infty} (B_{\rho})$ such that 
		\[
		v_\lambda(x)=v(x),   \quad  Dv_\lambda(x)=Dv(x) \quad    \quad \mathrm{a.e.} \ x \in B_{\rho} \backslash E_{\lambda}, 
		 \]
		and that
		$|Dv_{\lambda}|\leq c(n)\lambda$ for a.e. on $  B_{\rho}$, where
		\[
		\quad E_{\lambda}:=  \{x \in B_{\rho} :   \mathcal{M}(|Dv|)(x) >\lambda \}.
		\]
	 Using $v_{\lambda}$ as a test function in \eqref{apreq}, we obtain
	 \begin{align}\label{ii1esti1}
	 	\notag   & \int_{B_{\rho} \backslash E_{\lambda}} \mathbb{M}(x)\, A\bigl(x, \mathbb{M}(x) Dw \bigr) \cdot Dv_{\lambda}   \, dx  = - \int_{ E_{\lambda}}  \mathbb{M}(x)\, A\bigl(x, \mathbb{M}(x) Dw \bigr) \cdot Dv_{\lambda}  \, dx \\
	 	\notag & \hspace{15mm}   + \int_{B_{\rho} \backslash E_{\lambda}}  \mathbb{M}(x)\, A\bigl(x, \mathbb{M}(x) \mathbf{f} \bigr) \cdot Dv_{\lambda}   \, dx + \int_{ E_{\lambda}}  \mathbb{M}(x)\, A\bigl(x, \mathbb{M}(x) \mathbf{f} \bigr) \cdot Dv_{\lambda}   \, dx \\
	  &\hspace{5mm} \leq c \left( \lambda \int_{ E_{\lambda}}  |Dw|^{p-1} \omega  \, dx +\int_{B_{\rho} \backslash E_{\lambda}}  |\mathbf{f}|^{p-1} | Dv| \omega \, dx +  \lambda\int_{ E_{\lambda}} |\mathbf{f}|^{p-1} \omega  \, dx\right).
	 \end{align}
		Multiplying \eqref{ii1esti1} by $\lambda^{-(1+\delta)}$ and then integrating
		with respect to $\lambda$ over $(0,\infty)$, we obtain
		\begin{align}
			\notag I_{1} &  : = \int_{0}^{\infty} \lambda^{-(1+\delta)} \int_{B_{\rho} \backslash E_{\lambda}}  \mathbb{M}(x)\, A\bigl(x, \mathbb{M}(x) Dw \bigr) \cdot Dv_{\lambda}   \, dx \,   d\lambda \\
			\notag & \leq c_{1} \left( \int_{B_{\rho}} \left[  \int_{0}^{\mathcal{M}(|Dv|)} \lambda^{-\delta} |Dw|^{p-1} \omega    \,   d\lambda  \right]  \, dx  \right.   \\
			\notag &  \hspace{10mm}   + \int_{B_{\rho}} \left[  \int_{\mathcal{M}(|Dv|)}^{\infty} \lambda^{-(1+\delta)}  |\mathbf{f}|^{p-1} | Dv|   \omega  \,   d\lambda  \right]  \, dx    \\
			\notag & \hspace{15mm} \left. + \int_{B_{\rho}}  \left[  \int_{0}^{\mathcal{M}(|Dv|)} \lambda^{-\delta}   |\mathbf{f}|^{p-1}  \omega \,   d\lambda  \right]  \, dx \right)   \\
		    \notag & =: c_{1} ( I_{2}+I_{3}+I_{4} ),
		\end{align}
		for some positive constant $c_{1}$ depending on $n,p, \nu, L $ and $\Lambda$. Applying Young's inequality yields the estimate 
		\begin{align}\label{ii1esti2}
			\notag  I_{2} & = \frac{1}{1-\delta} \int_{ B_{\rho}}  [ \mathcal{M}(|Dv|) ]^{1-\delta} |Dw|^{p-1} \omega      \, dx \\
			\notag & \leq c \left( \int_{ B_{\rho}}  [ \mathcal{M}(|Dv|)]^{p-\delta} \omega \, dx  + \int_{ B_{\rho}} |Dw|^{p-\delta}  \omega \, dx \right)\\
			\notag & \leq c \left( \int_{ B_{\rho}}    |Dv|^{p-\delta} \omega \, dx  + \int_{ B_{\rho}} |Dw|^{p-\delta}  \omega \, dx \right) \\
			& \leq c \left( \int_{ B_{\rho}}    |Dw|^{p-\delta} \omega \, dx  + \int_{ B_{\rho}} |Dw_{0}|^{p-\delta}  \omega \, dx \right) .
		\end{align} 
		An analogous argument gives the following estimate for $I_{3}$:
		\begin{align}\label{ii1esti3}
			\notag &  I_{3} = \frac{1}{\delta}\int_{B_{\rho}}  [ \mathcal{M}(|Dv|)]^{-\delta} |\mathbf{f}|^{p-1} | Dv|  \omega   \, dx  \leq \frac{c}{\delta}\int_{B_{\rho}}  [ \mathcal{M}(|Dv|)]^{1-\delta} |\mathbf{f}|^{p-1} \omega    \, dx \\
			\notag& \leq \frac{\varepsilon}{\delta}   \int_{ B_{\rho}} [ \mathcal{M}(|Dv|)]^{p-\delta}   \omega   \, dx   + \frac{c(\varepsilon )}{\delta}   \int_{ B_{\rho}} |\mathbf{f}|^{p-\delta}   \omega   \, dx  \\
			\notag & \leq \frac{c \varepsilon }{\delta}   \int_{ B_{\rho}}   |Dv|^{p-\delta}   \omega  \, dx   + \frac{c(\varepsilon )}{\delta}   \int_{ B_{\rho}} |\mathbf{f}|^{p-\delta}   \omega   \, dx   \\
			 & \leq  \frac{c \varepsilon }{\delta}   \int_{ B_{\rho}}   |Dw|^{p-\delta}   \omega  \, dx   + \frac{c(\varepsilon )}{\delta} \left[   \int_{ B_{\rho}}     |Dw_{0}|^{p-\delta}   \omega  \, dx+  \int_{ B_{\rho}}   |\mathbf{f}|^{p-\delta}   \omega    \, dx \right]  .
		\end{align}
		We estimate $I_{4}$ as follows:
		\begin{align}\label{ii1esti4}
			\notag   I_{4} & = \int_{B_{\rho}}  \left[ \int_{0}^{\mathcal{M}(|Dv|)} \lambda^{-\delta} |\mathbf{f}|^{p-1}  \omega \,   d\lambda  \right]   \, dx\\
			\notag &  \leq  c \int_{ B_{\rho}}  [\mathcal{M}(|Dv|)]^{1-\delta}  |\mathbf{f}|^{p-1}    \omega   \, dx \\
			\notag& \leq c \left(   \int_{ B_{\rho}}  [\mathcal{M}(|Dv|)]^{p-\delta}  \omega \, dx   +   \int_{ B_{\rho}}   |\mathbf{f}|^{p-\delta}    \omega  \, dx  \right)  \\
			&  \leq c   \left( \int_{ B_{\rho}}    |Dw|^{p-\delta} \omega \, dx +\int_{ B_{\rho}}     |Dw_0|^{p-\delta} \omega \, dx   +    \int_{ B_{\rho}}  |\mathbf{f}|^{p-\delta}  \omega  \, dx \right) .
		\end{align}
		We now derive a lower bound for $I_{1}$. Since
		\[
		Dv_{\lambda}=Dv
		\quad\text{on } B_{\rho}\setminus E_{\lambda},
		\]
		it follows that
		\begin{align}\label{ii1esti5}
			\notag  I_{1} &  = \int_{ B_{\rho}}  \left[  \int_{\mathcal{M}(|Dv|)}^{\infty} \lambda^{-(1+\delta)}  \mathbb{M}(x)\, A\bigl(x, \mathbb{M}(x) Dw \bigr) \cdot Dv_{\lambda}   \,   d\lambda \right]   \, dx\\
			\notag & = \frac{1}{\delta} \int_{B_{\rho}}   [\mathcal{M}(|Dv|)]^{-\delta}  \mathbb{M}(x)\, A\bigl(x, \mathbb{M}(x) Dw \bigr) \cdot Dv    \,    dx \\
			\notag & =  \frac{1}{\delta} \int_{B_{\rho}}   [\mathcal{M}(|Dv|)]^{-\delta} \mathbb{M}(x) \left( A\bigl(x, \mathbb{M}(x) Dw \bigr) -    A\bigl(x, \mathbb{M}(x) Dw_{0} \bigr) \right)\cdot Dv     \,    dx  \\
			\notag & \hspace{10mm} +   \frac{1}{\delta} \int_{B_{\rho}}  [\mathcal{M}(|Dv|)]^{-\delta}  \mathbb{M}(x)\, A\bigl(x, \mathbb{M}(x) Dw_{0} \bigr) \cdot Dv     \,    dx   \\
			\notag & \geq \frac{c_{2}}{\delta} \int_{ B_{\rho}}   [\mathcal{M}(|Dv|)]^{-\delta} |V(Dw)-V(Dw_{0})|^2 \omega \, dx   \\
			\notag & \hspace{10mm} - \frac{L}{\delta} \int_{ B_{\rho}}  [\mathcal{M}(|Dv|)]^{-\delta} |Dw_{0}|^{p-1} |Dv|  \omega   \, dx  \\
			& = :  \frac{1}{\delta}(I_{5} - I_{6}) 
		\end{align}
		for a constant $c_{2}$ depending on $n, p, \nu $ and $\Lambda$. For the estimate of $I_{5}$, we invoke Lemma~\ref{mucken} once more to infer that
	\[	
		[\mathcal{M}(|Dv|)]^{-\delta} \omega \in A_{p},
	\]
		and consequently,
		\begin{align}\label{ii2esti}
			\notag   I_{5} &= c_{2} \int_{ B_{\rho}}   [\mathcal{M}(|Dv|)]^{-\delta}  |V(Dw)-V(Dw_{0})|^2  \omega \,    dx \\
			& \geq c \int_{ B_{\rho}}  [\mathcal{M}(|Dv|)]^{-\delta} \left\{ \mathcal{M}( |V(Dw)-V(Dw_{0})|^{\frac{2}{p}}) \right\}^{p}   \omega   \, dx .
		\end{align}
		Also, we observe that for $x \in B_{\rho}$,
		\begin{align}\label{gbound2}
			 \mathcal{M}(|Dv|)(x) \leq  c \left[    \mathcal{M}( |V(Dw)-V(Dw_{0})|^{\frac{2}{p}}) (x)  + \mathcal{M}( | Dw_{0}|) (x) \right]  .
		\end{align}
		Considering separately the cases in which either of the last two terms in \eqref{gbound2} dominates, we obtain
		\begin{align}\label{algebraic2}
			\notag  &   [  \mathcal{M}( |V(Dw)-V(Dw_{0})|^{\frac{2}{p}})   ]^{p-\delta}    \\
			& \leq  c   [\mathcal{M}(|Dv|)]^{-\delta} [  \mathcal{M}( |V(Dw)-V(Dw_{0})|^{\frac{2}{p}})   ]^{p}     +  c [\mathcal{M}( | Dw_{0}|) ]^{p-\delta}
		\end{align}
		on $ B_{\rho}$. Putting together \eqref{ii2esti} and \eqref{algebraic2}, we arrive at
		\begin{align}\label{ii2esti2}
			\notag   I_{5} &  \geq c \int_{ B_{\rho}} \left\{  \mathcal{M}( |V(Dw)-V(Dw_{0})|^{\frac{2}{p}}) \right\}^{p-\delta}     \omega   \, dx - c \int_{ B_{\rho}}  [\mathcal{M}( | Dw_{0}|) ]^{p-\delta} \omega \, dx \\
			\notag &   \geq c \int_{ B_{\rho}}  |V(Dw)-V(Dw_{0})|^{\frac{2(p-\delta)}{p}}    \omega    \, dx -   c \int_{ B_{\rho}} |Dw_{0}|^{p-\delta} \omega \, dx \\
			& \geq c \int_{ B_{\rho}}  |Dw|^{p-\delta}   \omega  \, dx - c \int_{ B_{\rho}}  |Dw_{0}|^{p-\delta} \omega  \, dx .
		\end{align}
		By Young's inequality, the term $I_{6}$ can be estimated as follows:
		\begin{align}\label{ii3esti}
			\notag  I_{6}  &=  L \int_{ B_{\rho}}   [\mathcal{M}(|Dv|)]^{-\delta}   |Dw_{0}|^{p-1} |Dv|  \omega     \, dx \\
			\notag & \leq L \int_{ B_{\rho}}   [\mathcal{M}(|Dv|)]^{1-\delta} |Dw_{0}|^{p-1}     \omega  \, dx \\
			\notag & \leq \varepsilon \int_{ B_{\rho}} [\mathcal{M}(|Dv|)]^{p-\delta}   \omega  \, dx  + c(\varepsilon) \int_{ B_{\rho}} |Dw_{0}|^{p-\delta}   \omega  \, dx  \\
			\notag & \leq c \varepsilon \int_{ B_{\rho}} |Dv|^{p-\delta}  \omega   \, dx  + c(\varepsilon) \int_{ B_{\rho}}|Dw_{0}|^{p-\delta}  \omega    \, dx\\
			& \leq  c \varepsilon \int_{ B_{\rho}} |Dw|^{p-\delta}  \omega   \, dx  + c(\varepsilon) \int_{ B_{\rho}}|Dw_{0}|^{p-\delta}  \omega    \, dx.
		\end{align}
		We finally combine \eqref{ii1esti1}--\eqref{ii1esti5}, \eqref{ii2esti2}   and
		\eqref{ii3esti}  to conclude
		\begin{align} 
			\notag  \int_{ B_{\rho}}  |Dw|^{p-\delta} \omega   \, dx  & \leq c \{ \varepsilon+\delta \} \int_{ B_{\rho}} |Dw|^{p-\delta} \omega    \, dx   \\
			\notag  & \hspace{5mm} + c \{ c(\varepsilon)+\delta \} \left[ \int_{ B_{\rho}}|Dw_{0}|^{p-\delta} \omega   \, dx   +\int_{ B_{\rho}}    |\mathbf{f}|^{p-\delta} \omega    \, dx  \right].
		\end{align}
		  By choosing $\varepsilon$ and $\delta_{1}$ sufficiently small, we obtain the desired estimate \eqref{apresti} for $\delta \leq \delta_{1}$.
	\end{proof}
	
	It should be emphasized that the estimate obtained above is only an a priori
	estimate at the level of the solution space itself; namely, it provides a
	$W^{1,p-\delta}(B_{\rho})$ estimate only for very weak solutions belonging to
	$W^{1,p-\delta}(B_{\rho})$. This is not yet the desired gradient estimate.
	Our remaining task is to establish a genuine higher integrability estimate,
	namely a $W^{1,q}$ estimate for
	$W^{1,p-\delta_{0}}(B_{\rho})$ solutions, where
	$q$ lies in the appropriate range $[p-\delta_{0}, p+\delta_{0}]$.
	
	 We next establish the existence of weak solutions to the homogeneous Dirichlet problem in $W^{1,p-\delta}(B_{\rho})$, which  will be used in the construction of comparison
	 estimates in the next section. To this end, we assume
	 that  $ 0<\delta \leq \delta_{2}:=\min\{\sigma,\delta_{1}\}, $
	 where $\sigma$ and $\delta_{1}$ denote the constants appearing in
	 Lemmas~\ref{higher} and \ref{apriori}, respectively, so that both
	 results are available throughout the proof. Moreover, we have $p - \delta \geq p_{0} > 1 $ by choice of $\sigma$ and $\delta_{1}$.   See also
	 \cite[Section~7]{IS94}.

	\begin{corollary}\label{existence}
		 Assume \eqref{struc} and \eqref{unidegen}. If  $\omega \in A_{p}$ for some $p>1$,  then for all
		$w_{0}\in W^{1,p-\delta }_{\omega}  ( B_{\rho}) $ with $\delta \in
		(0,\delta_{2}]$, there exists a very weak solution $w\in
		W^{1,p-\delta }_{\omega}  ( B_{\rho})$ to the problem
		\begin{align}\label{exieq}
			\begin{cases}
				-\mathrm{div\,}A_{\mathbb{M}}(x,Dw)=0 & \textrm{in}\ B_{\rho} \\
				w\in w_{0}+ W^{1,p-\delta }_{\omega,0}( B_{\rho}),
			\end{cases}
		\end{align}
		which satisfies the estimate
		\begin{align}\label{exiesti}
			\int_{B_{\rho}}   |Dw|^{p-\delta} \omega \, dx   \leq c   \int_{B_{\rho}} |Dw_{0}|^{p-\delta} \omega  \, dx
		\end{align}
		where the  constant $c>0$ depends on  $ n, p, \nu, L, \Lambda $ and $ [\omega]_{A_{p}}$.
	\end{corollary}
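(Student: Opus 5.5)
The plan is an approximation argument in the spirit of \cite[Section~7]{IS94}: solve regularized problems in the natural energy space, control them uniformly below $p$ with the a priori estimate of Lemma~\ref{apriori}, and pass to the limit.

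\emph{Step 1: approximation of the boundary datum.} By density of smooth functions in $W^{1,p-\delta}_{\omega}(B_{\rho})$ (valid since $\omega\in A_{p-\delta}$), I choose $w_{0,k}\in W^{1,p}_{\omega}(B_{\rho})$ with
\[
w_{0,k}\to w_0 \quad \text{in } W^{1,p-\delta}_{\omega}(B_{\rho}).
\]
For each $k$, the operator $v\mapsto -\mathrm{div}\,A_{\mathbb M}(x,Dv)$ is strictly monotone, coercive and hemicontinuous on the reflexive space $W^{1,p}_{\omega,0}(B_{\rho})$, by the weighted structure conditions listed after \eqref{maineq}. Minty--Browder therefore gives a weak solution $w_k\in w_{0,k}+W^{1,p}_{\omega,0}(B_{\rho})$ of $-\mathrm{div}\,A_{\mathbb M}(x,Dw_k)=0$. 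Each $w_k$ is in particular a very weak solution in $W^{1,p-\delta}_{\omega}(B_{\rho})$ of \eqref{apreq} with $\mathbf f=0$. Lemma~\ref{apriori}, applicable because $\delta\le\delta_2\le\delta_1$, then yields
\[
\int_{B_\rho}|Dw_k|^{p-\delta}\omega\,dx\le c\int_{B_\rho}|Dw_{0,k}|^{p-\delta}\omega\,dx.
\]
Applying the same lemma to the differences is not immediate, since the equation is nonlinear. I therefore only aim for boundedness, plus compactness via weighted Poincaré (Lemma~\ref{poincare}) applied to $w_k-w_{0,k}$. This gives, after a subsequence, $w_k\rightharpoonup w$ weakly in $W^{1,p-\delta}_{\omega}(B_\rho)$ with $w-w_0\in W^{1,p-\delta}_{\omega,0}(B_\rho)$, since that space is weakly closed. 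Lower semicontinuity then gives \eqref{exiesti}.

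\emph{Step 2: passing to the limit in the nonlinearity.} This is the main obstacle, because weak convergence of $Dw_k$ does not allow passage to the limit in $A_{\mathbb M}(x,Dw_k)$. I would prove a.e.\ convergence of $Dw_k$ to $Dw$ by showing that $(w_k)$ is Cauchy in a weaker sense. One option is to repeat the Lipschitz truncation argument of Lemma~\ref{apriori} for the difference $v=w_k-w_j-(w_{0,k}-w_{0,j})$. The monotonicity term
\[
\int [\mathcal M(|Dv|)]^{-\delta}\,\bigl(A_{\mathbb M}(x,Dw_k)-A_{\mathbb M}(x,Dw_j)\bigr)\cdot Dv\,dx
\]
is bounded by terms involving $\|Dw_{0,k}-Dw_{0,j}\|$ together with small multiples of $\int|Dw_k-Dw_j|^{p-\delta}\omega$. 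The smallness comes from $\delta$ and the truncation, as in the $I_3$, $I_6$ estimates. The weight $[\mathcal M(|Dv|)]^{-\delta}\omega\in A_p$ from Lemma~\ref{mucken}(c) lets one bound $\int|V(Dw_k)-V(Dw_j)|^{2(p-\delta)/p}\omega$. Using \eqref{vpesti}, I expect either a genuine Cauchy estimate or at least convergence in measure of $Dw_k$.

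If the full Cauchy bound fails because the absorbed constant is not small, I would fall back on the standard Minty-type route. Testing with the truncation of $w_k-w$ over the good set $\{\mathcal M(|D(w_k-w)|)\le\lambda\}$, one shows that $\int_{\{\text{good}\}}(A_{\mathbb M}(x,Dw_k)-A_{\mathbb M}(x,Dw))\cdot D(w_k-w)$ is small. The part on the bad set is controlled by $\lambda^{-\delta}$ times the uniform bound. This gives $Dw_k\to Dw$ in measure.

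\emph{Step 3: identifying the limit.} Once $Dw_k\to Dw$ a.e., the bound $|A_{\mathbb M}(x,Dw_k)|\le L\omega|Dw_k|^{p-1}$ together with the uniform $L^{p-\delta}_\omega$ bound gives equi-integrability of $A_{\mathbb M}(x,Dw_k)$ on $B_\rho$, since $(p-1)<p-\delta$ for $\delta<1$. Vitali's theorem then passes the limit in $\int A_{\mathbb M}(x,Dw_k)\cdot D\eta\,dx=0$ for $\eta\in C^\infty_c(B_\rho)$, so $w$ is a very weak solution of \eqref{exieq}.
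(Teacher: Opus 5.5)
Your Steps 1 and 3 match the paper, but Step 2 has a genuine gap: neither of your routes gives convergence of $Dw_k$.

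\textbf{Route (a) does not give a Cauchy estimate.} After testing with the truncation of $v=(w_k-w_{0,k})-(w_j-w_{0,j})$ and integrating against $\lambda^{-1-\delta}$, the bad-set term is $c\lambda\int_{E_\lambda}(|Dw_k|^{p-1}+|Dw_j|^{p-1})\omega\,dx$. Here \eqref{struc} gives no quantitative bound on $|A(x,\xi)-A(x,\zeta)|$ in terms of $|\xi-\zeta|$, so you can only bound this by the full energies $\int(|Dw_k|^{p-\delta}+|Dw_j|^{p-\delta})\omega\,dx$. It is not a small multiple of $\int|Dw_k-Dw_j|^{p-\delta}\omega\,dx$. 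What comes out is a closeness of order $\varepsilon(\delta)K$, where $K$ is the uniform bound. That is the same kind of statement as Lemma~\ref{compari}, and it does not improve as $k,j\to\infty$.

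\textbf{Route (b) rests on a false decay claim.} The bad-set part does not decay like $\lambda^{-\delta}$. At a fixed level, $\omega(E_\lambda)\lesssim\lambda^{-(p-\delta)}$ and H\"older give only
\[
\lambda\int_{E_\lambda}|Dw_k|^{p-1}\omega\,dx\le\lambda\,\omega(E_\lambda)^{\frac{1-\delta}{p-\delta}}\Bigl(\int|Dw_k|^{p-\delta}\omega\,dx\Bigr)^{\frac{p-1}{p-\delta}}\lesssim\lambda^{\delta},
\]
which grows in $\lambda$. Decay would need integrability of $Dw_k$ above $p$, and $W^{1,p-\delta}$ bounds alone cannot provide that.

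\textbf{The missing idea is Lemma~\ref{higher}.} This is why the paper takes $\delta\le\delta_2\le\sigma$. Each $w_k$ solves the homogeneous equation and is bounded in $W^{1,p-\delta}_\omega(B_\rho)$, hence in $W^{1,p-\sigma}_\omega(B_\rho)$ by H\"older. Covering $U\Subset B_\rho$ by balls $B_r$ with $B_{2r}\subset B_\rho$ then gives $\sup_k\int_U|Dw_k|^{p+\sigma}\omega\,dx<\infty$.

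Locally you are back in the energy space, and no truncation is needed. Test with $\eta(w_k-w)$, $\eta\in C^\infty_c(U)$. Use $w_k\to w$ in $L^p_\omega(U)$ and the boundedness of $\omega^{-1}A_{\mathbb M}(x,Dw_k)$ in $L^{p'}_\omega(U)$. Together with the weak convergence of $D(w_k-w)$ this gives
\[
\int\eta\,\bigl(A_{\mathbb M}(x,Dw_k)-A_{\mathbb M}(x,Dw)\bigr)\cdot D(w_k-w)\,dx\to0,
\]
so by monotonicity $Dw_k\to Dw$ a.e. on compact subsets along a subsequence. The test functions in Definition~\ref{defvery} have compact support, so this local convergence suffices: your Step 3 (Vitali), or the Minty--Browder argument in $U$ that the paper uses, then finishes the proof.
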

	
	\begin{proof}
		Let $\delta\in(0,\delta_{2}]$, and let
		$\{w_{0,k}\}\subset C^{\infty}(\overline{B_{\rho}})$ be a sequence satisfying
		\begin{align} 
		 \notag	w_{0,k} \rightarrow w_{0} \quad \textrm{strongly in} \ W^{1,p-\delta}_{\omega}  ( B_{\rho}).
		\end{align}
		For each $k\in\mathbb{N}$, there exists a unique weak solution
		$w_{k}\in W^{1,p }_{\omega}( B_{\rho})$ to the Dirichlet problem
		\begin{align} 
		 \notag	\begin{cases}
				-\mathrm{div\,}A_{\mathbb{M}}(x,Dw_{k})=0 & \textrm{in}\ B_{\rho} \\
				w_{k}\in w_{0,k}+ W^{1,p }_{\omega,0}( B_{\rho}).
			\end{cases}
		\end{align}
		The existence of a weak solution follows from the standard existence theory for monotone operators, together with the fact that $|\mathbb{M}|^{p}$ belongs to the Muckenhoupt class $A_{p}$; see, for instance, \cite{S97, IM19}. By Lemma~\ref{apriori}, the sequence $\{w_k\}$ satisfies the uniform
		estimate
		\begin{align}\label{seqbound}
			\int_{B_{\rho}} |Dw_{k}|^{p-\delta} \omega  \, dx   \leq c   \int_{B_{\rho}} |Dw_{0,k}|^{p-\delta} \omega  \, dx  \leq  2 c   \int_{B_{\rho}} |Dw_{0}|^{p-\delta} \omega  \, dx 
		\end{align} 
		for all sufficiently large $k$.  Combining the estimate \eqref{seqbound} with Lemma~\ref{poincare}, we
		obtain a uniform bound for the sequence $\{w_k\}$ in
		$W^{1,p-\delta}_{\omega}(B_{\rho})$. Since $\omega \in A_{p-\delta}$ and $B_{\rho}$ is a bounded Lipschitz domain, the weighted Rellich–Kondrachov compact embedding theorem (see, for instance, \cite{GU09}) gives 
		$W^{1,p-\delta}_{\omega}(B_{\rho})\Subset L^{p-\delta}_{\omega}(B_{\rho})$. Therefore, there exists a subsequence, still
		denoted by $\{w_k\}$, and a function
		$w\in W^{1,p-\delta}_{\omega}(B_{\rho})$ such that
		\begin{align} 
		 \notag	w_{k} \rightharpoonup w \quad \textrm{weakly in} \ W^{1,p-\delta}_{\omega}  ( B_{\rho}),  \quad w_{k} \rightarrow w \quad \textrm{strongly in} \ L^{p-\delta}_{\omega} (B_{\rho}).
		\end{align}
		Moreover, since
		$$
		\tilde{w}_{k}:=w_{k}-w_{0,k}\in W^{1,p-\delta}_{\omega,0}(B_{\rho}),
		$$
		and $\tilde{w}_{k}$ converges weakly to $w-w_{0}$ as $k\to\infty$, we conclude that  $w\in w_{0}+ W^{1,p-\delta}_{\omega, 0}( B_{\rho})$.
		
		Fix $\phi\in C_{0}^{\infty}(B_{\rho})$ and let
		$U\Subset B_{\rho}$ be an open set such that
		$\operatorname{supp}\phi\subset U$. The higher integrability estimate in
		Lemma~\ref{higher} implies that $\{w_k\}$ is uniformly bounded in
		$W^{1,p}_{\omega}(U)$. Therefore, up to a subsequence, we have
		\begin{align} 
			\notag w_{k} \rightharpoonup w \quad \textrm{weakly in} \ W^{1,p}_{\omega} (U), \quad w_{k} \rightarrow w \quad \textrm{strongly in} \ L^{p}_{\omega} (U).
		\end{align}
		Using the Minty--Browder argument, we can pass to the limit in the weak
		formulation and conclude that $w$ is a weak solution of
		\[
		\mathrm{div}\,A_{\mathbb{M}}(x,Dw)=0
		\]
		in $U$. In particular, for every $\phi\in C_{0}^{\infty}(U)$, we have
		\begin{align} 
			\notag
			0
			&=
			\int_{U}\mathbb{M}(x)\,
			A\bigl(x,\mathbb{M}(x)Dw\bigr)\cdot D\phi\,dx  \\
			\notag&=
			\int_{B_{\rho}}\mathbb{M}(x)\,
			A\bigl(x,\mathbb{M}(x)Dw\bigr)\cdot D\phi\,dx .
		\end{align}
		Since $\phi$ was chosen arbitrarily, $w$ is a very weak solution of
		\eqref{exieq} with the estimates \eqref{exiesti}.
	\end{proof}

	\section{Proof of Theorem \ref{main1}}\label{Sec5} 
	This section is devoted to the proof of Theorem~\ref{main1}. To this end, we compare a solution $u$ to the	equation \eqref{maineq}
	$$
	-\operatorname{div} A_{\mathbb{M}}(x,D u)
	=
	-\operatorname{div} A_{\mathbb{M}}(x, \mathbf{f})
	\qquad \text{in } \Omega
	$$
	with a solution $w$ to the following homogeneous problem:
	\begin{align}\label{refeq}
		\begin{cases}
			\mathrm{div\,}A_{\mathbb{M}}(x,Dw)=0 & \textrm{in}\ B_{2r} \\
			w\in u+ W^{1,p-\delta_{0}}_{\omega,0}( B_{2r})
		\end{cases}
	\end{align}
	for $B_{2r} \subset \Omega $. The problem \eqref{refeq} will serve as the reference problem, where $\delta_{0}>0$ will be chosen later. We further assume that
	\begin{equation}\label{intlambda}
		\frac{1}{\omega(B_{2r})}\int_{B_{2r}} |Du|^{p-\delta_{0}} \omega \, dx  \leq  \Gamma^{p-\delta_{0}},  \quad   \frac{1}{\omega(B_{2r})} \int_{B_{2r}}  |\mathbf{f}|^{p-\delta_{0}} \omega \, dx \leq (\delta_{0} \Gamma)^{p-\delta_{0}}
	\end{equation}
	for some $\Gamma>0$. These assumptions will be imposed in the course of a level set argument developed later. We will frequently use the universal constants introduced in the previous
	section, such as $p_{0}, \sigma$ and $\delta_{2}$, without further mention. We now establish the comparison estimate between the solutions $u$ and $w$ introduced above.

	\begin{lemma}\label{compari}
	 Assume \eqref{struc}, \eqref{unidegen} and $\omega \in A_{p}$ with $p>1$. Then for any
		$\varepsilon \in (0,1]$, there exists a constant $ \delta_{0} = \delta_{0}(n, p, \nu, L, \Lambda, [\omega]_{A_{p}},  \varepsilon ) $ $ \in (0, \frac{\sigma}{2}]$
		such that the following holds: For any very weak solution $u \in
		W^{1,p-\delta_{0}}_{\omega} (\Omega)$ to \eqref{maineq} with
		$\mathbf{f} \in L^{p-\delta_{0}}_{\omega}(\Omega)$, if
		\eqref{intlambda} holds, then there exists a very weak solution $w
		\in W^{1,p-\delta_{0}}_{\omega} ( B_{2r}  ) $ to the equation
		\eqref{refeq} such that
		\begin{align}\label{compa}
			\frac{1}{\omega(B_{2r})} \int_{B_{2r}}|V(Du)-V(Dw)|^{2(1-\frac{\delta_{0}}{p})} \omega \, dx \leq \varepsilon \Gamma^{p-\delta_{0}}
		\end{align}
		with the estimate
		\begin{align}\label{compenergy}
			\frac{1}{\omega(B_{2r})}\int_{B_{2r}} |Dw|^{p-\delta_{0}} \omega \, dx  \leq  c \Gamma^{p-\delta_{0}} ,  \quad    \frac{1}{\omega(B_{r})} \int_{B_{r}} |Dw|^{p+\sigma} \omega \, dx     \leq  c \Gamma^{p+\sigma},
		\end{align}
		where the constant $c>0$ depends only on  $ n, p, \nu, L, \Lambda$ and $ [\omega]_{A_{p}}$.
	\end{lemma}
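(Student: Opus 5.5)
Let $w$ be the very weak solution of \eqref{refeq} provided by Corollary~\ref{existence} with boundary datum $w_0=u$ and $\delta=\delta_0\le\sigma/2\le\delta_2$. The energy bounds \eqref{compenergy} then follow directly from earlier results. Estimate \eqref{exiesti}, together with \eqref{intlambda}, gives the first bound. For the second, I apply Lemma~\ref{higher} on $B_r$. Since $\delta_0\le\sigma$, the left side of \eqref{higheresti} is controlled by the $(p-\sigma)$-average over $B_{2r}$, which by Hölder is at most the $(p-\delta_0)$-average. Using the doubling property \eqref{double}, this yields $\omega(B_r)^{-1}\int_{B_r}|Dw|^{p+\sigma}\omega\,dx\le c\Gamma^{p+\sigma}$.

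For the comparison estimate \eqref{compa}, I rerun the Lipschitz truncation argument of Lemma~\ref{apriori}, now with $v:=u-w\in W^{1,p-\delta_0}_{\omega,0}(B_{2r})$. I test the difference of the equations for $u$ and $w$ with $v_\lambda$, multiply by $\lambda^{-(1+\delta_0)}$ and integrate in $\lambda$. On the left, monotonicity gives
\[
\frac{c}{\delta_0}\int_{B_{2r}}[\mathcal M(|Dv|)]^{-\delta_0}|V(Du)-V(Dw)|^2\omega\,dx .
\]
The right-hand side splits into three pieces:
\begin{itemize}
\item[(i)] the $E_\lambda$ contributions $\lambda\int_{E_\lambda}(|Du|^{p-1}+|Dw|^{p-1}+|\mathbf f|^{p-1})\omega$, which integrate to $\int[\mathcal M(|Dv|)]^{1-\delta_0}(\cdots)\omega$ with \emph{no} factor $1/\delta_0$;
\item[(ii)] the $\mathbf f$ term off $E_\lambda$, which carries a factor $1/\delta_0$ but is bounded by $\frac{c}{\delta_0}\int[\mathcal M|Dv|]^{1-\delta_0}|\mathbf f|^{p-1}\omega$;
\item[(iii)] after Young's inequality and Lemma~\ref{mucken}(c), the bound $\frac{1}{\delta_0}\bigl(\tau\,\Gamma^{p-\delta_0}+c(\tau)(\delta_0\Gamma)^{p-\delta_0}\bigr)\omega(B_{2r})$ from \eqref{intlambda}.
\end{itemize}
Term (i) is at most $c\Gamma^{p-\delta_0}\omega(B_{2r})$, using the $L^{p-\delta_0}_\omega$ bounds on $Du$, $Dw$ and the maximal function bound. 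Multiplying through by $\delta_0$ gives
\[
\int_{B_{2r}}[\mathcal M(|Dv|)]^{-\delta_0}|V(Du)-V(Dw)|^2\omega\,dx\le c\bigl(\delta_0+\tau+c(\tau)\delta_0^{p-\delta_0}\bigr)\Gamma^{p-\delta_0}\omega(B_{2r}).
\]

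It remains to remove the weight $[\mathcal M|Dv|]^{-\delta_0}$ and reach the exponent $2(1-\delta_0/p)=2(p-\delta_0)/p$. For this I write $|V(Du)-V(Dw)|^{2(p-\delta_0)/p}=\bigl([\mathcal M|Dv|]^{-\delta_0}|V(Du)-V(Dw)|^2\bigr)^{\frac{p-\delta_0}{p}}[\mathcal M|Dv|]^{\frac{\delta_0(p-\delta_0)}{p}}$. Hölder with exponents $\frac{p}{p-\delta_0}$ and $\frac{p}{\delta_0}$ then bounds its integral by
\[
\Bigl(\int[\mathcal M|Dv|]^{-\delta_0}|V(Du)-V(Dw)|^2\omega\Bigr)^{\frac{p-\delta_0}{p}}\Bigl(\int[\mathcal M|Dv|]^{p-\delta_0}\omega\Bigr)^{\frac{\delta_0}{p}}.
\]
The second factor is at most $(c\Gamma^{p-\delta_0}\omega(B_{2r}))^{\delta_0/p}$. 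Hence the normalized left side of \eqref{compa} is at most $c(\delta_0+\tau+c(\tau)\delta_0^{p-\delta_0})^{(p-\delta_0)/p}\Gamma^{p-\delta_0}$.

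To close, I choose $\tau$ small and then $\delta_0$ small, both depending on $\varepsilon$, so that this quantity is at most $\varepsilon$. I expect the main obstacle to be keeping the bookkeeping uniform in $\delta_0$. In particular, the constants from Lemma~\ref{mucken}(c) and Lemma~\ref{poincare} must be independent of $\delta_0$, and the terms that genuinely carry a factor $1/\delta_0$ must be exactly those controlled by the smallness $(\delta_0\Gamma)^{p-\delta_0}$ of $\mathbf f$. A further point is that $|Dv|$ may be large where $Du$ is, so $v_\lambda$ must be handled exactly as in Lemma~\ref{apriori}. If $\tau$ cannot be made to interact cleanly with $\delta_0$, I would bound (ii) more crudely by $\frac{c}{\delta_0}\|\mathcal M|Dv|\|^{1-\delta_0}\|\mathbf f\|^{p-1}$ in $L^{p-\delta_0}_\omega$-normalized form. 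This gives $c\,\delta_0^{p-1-1}\Gamma^{p-\delta_0}$, which is small when $p>2$; for $p\le2$ I would revert to the $\tau$-splitting.
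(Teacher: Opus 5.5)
Your proposal is correct. Up to the upper bound it is the paper's argument: the same $w$ from Corollary~\ref{existence}, the same derivation of \eqref{compenergy}, the same truncation of $v=u-w$, and the same splitting in which only the off-$E_\lambda$ $\mathbf f$-term carries $1/\delta_0$ and is beaten by the smallness $(\delta_0\Gamma)^{p-\delta_0}$. Your $\tau$ is the paper's $\varepsilon_1$.

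The genuine difference is how you remove the weight $[\mathcal M(|Dv|)]^{-\delta_0}$.
\begin{itemize}
\item \textbf{The paper's route.} It uses that $[\mathcal M(|Dv|)]^{-\delta_0}\omega\in A_p$ (Lemma~\ref{mucken}(c)) to replace $|V(Du)-V(Dw)|^{2}$ by $\{\mathcal M(|V(Du)-V(Dw)|^{2/p})\}^{p}$. It then bounds $\mathcal M(|Dv|)$ pointwise via \eqref{vpesti} by $c(\tilde\varepsilon)\mathcal M(|V(Du)-V(Dw)|^{2/p})+c\tilde\varepsilon\,\mathcal M(|Du|)$ and splits into cases. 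This costs a third parameter $\varepsilon_2$ and an error term $\varepsilon_2\int|Du|^{p-\delta_0}\omega\,dx$.
\item \textbf{Your route.} A single H\"older inequality with exponents $p/(p-\delta_0)$ and $p/\delta_0$ suffices. It costs only the factor $\bigl(\int[\mathcal M(|Dv|)]^{p-\delta_0}\omega\,dx\bigr)^{\delta_0/p}\le \bigl(c\,\Gamma^{p-\delta_0}\omega(B_{2r})\bigr)^{\delta_0/p}$. This is harmless because $Dv$ is already controlled by \eqref{intlambda} and the first bound in \eqref{compenergy}.
\end{itemize}
Your argument is shorter. It needs only the boundedness of $\mathcal M$ on $L^{p-\delta_0}_\omega$, uniformly for $p-\delta_0\ge p_0$. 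It does not need the $A_p$ property of the perturbed weight or \eqref{vpesti}. The paper's route gives linear rather than power-$(p-\delta_0)/p$ dependence on the small parameters and mirrors the mechanism of Lemma~\ref{apriori}. For the statement at hand nothing is lost with yours.

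Two small corrections.
\begin{itemize}
\item The chain $\sigma/2\le\delta_2$ is not automatic, since $\delta_2=\min\{\sigma,\delta_1\}$. You should simply impose $\delta_0\le\min\{\sigma/2,\delta_2\}$; the paper takes $\delta_0\le\delta_2/2$.
\item Your fallback is miscounted. After multiplying through by $\delta_0$, the crude H\"older bound on (ii) becomes $c\,\delta_0^{p-1}\Gamma^{p-\delta_0}\omega(B_{2r})$. This is small for every $p>1$, not only $p>2$, and it would even make $\tau$ unnecessary.
\end{itemize}
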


	\begin{proof}
		Let $0<\delta_{0}\leq \frac{\delta_{2}}{2}$. Then all the results established in Section 4 apply with $\delta=\delta_{0}$. By Corollary \ref{existence}, there exists a very weak solution
		$$
		w\in W^{1,p-\delta_{0}}_{\omega}(B_{2r})
		$$
		to \eqref{refeq} satisfying the first estimate in \eqref{compenergy}. Furthermore, by Lemma \ref{higher} and Hölder's inequality,
		\begin{align}
			\notag     \frac{1}{\omega(B_{r})} \int_{B_{r}} |Dw|^{p+\sigma} \omega \, dx    & \leq   c^{p+\sigma}  \left(  \frac{1}{\omega(B_{2r})} \int_{B_{2r}}   |Dw|^{p-\sigma} \omega \, dx  \right)^{\frac{p+\sigma}{p-\sigma}} \\
			\notag & \leq  c \left( \frac{1}{\omega(B_{2r})}  \int_{B_{2r}} |Dw|^{p-\delta_{0}}\omega  \, dx  \right)^{\frac{p+\sigma}{p-\delta_{0}}}  \leq  c \Gamma^{p+\sigma}.
		\end{align}
		This proves the second estimate in \eqref{compenergy}.
		
	   To establish \eqref{compa}, we set$$
	   v:=u-w\in W^{1,p-\delta_{0}}_{\omega,0}(B_{2r}),$$
	   and apply the Lipschitz truncation technique from Lemma \ref{liptrun} to obtain a function $v_\lambda\in W_{0}^{1,\infty}(B_{2r})$  for some $\lambda>0$. We have
	   $$
	   v_\lambda=v \quad \text{and} \quad Dv_\lambda=Dv
	   \quad\text{a.e. in } B_{2r}\setminus E_\lambda,
	   $$
	   where
	   $$
	   E_\lambda
	   := \left\{x\in B_{2r}:\mathcal{M}(|Dv|)(x)>\lambda \right\},
	   $$
	   and $ |Dv_\lambda|\le c\lambda $  a.e. in  $B_{2r}$.
	   Using \(v_\lambda\) as a test function in both \eqref{maineq} and \eqref{refeq}, we obtain
		\begin{align}\label{iii1esti1}
			\notag &   \int_{B_{2r}  \backslash E_{\lambda}}   \mathbb{M}(x) (  A\bigl(x, \mathbb{M}(x) Du \bigr) -  A\bigl(x, \mathbb{M}(x) Dw \bigr)  )\cdot Dv_{\lambda}  \, dx \\
			\notag&= - \int_{ E_{\lambda}}  \mathbb{M}(x) (  A\bigl(x, \mathbb{M}(x) Du \bigr) -  A\bigl(x, \mathbb{M}(x) Dw \bigr)  )\cdot Dv_{\lambda}  \, dx \\
			\notag& \hspace{5mm}+ \int_{ B_{2r} }  \mathbb{M}(x) (   A\bigl(x, \mathbb{M}(x) \mathbf{f} \bigr)  )\cdot Dv_{\lambda}   \, dx \\
			\notag &\leq c \left(  \lambda \int_{ E_{\lambda}}   |Du|^{p-1} \omega   \, dx +  \lambda \int_{ E_{\lambda}}  |Dw|^{p-1} \omega    \, dx \right. \\
		& \hspace{5mm} \left. +\int_{B_{2r} \backslash E_{\lambda}}   |\mathbf{f}|^{p-1}  | Dv| \omega \, dx +   \lambda \int_{ E_{\lambda}} |\mathbf{f}|^{p-1}  \omega  \, dx\right).
		\end{align}
	Multiplying \eqref{iii1esti1} by $\lambda^{-(1+\delta_{0})}$ and
	integrating with respect to $\lambda$ over $(0,\infty)$, we proceed as
	in the derivation of \eqref{ii1esti1} to obtain
		\begin{align}\label{iii1esti2}
			\notag &   \delta_{0} I_{1} : = \delta_{0} \int_{0}^{\infty} \lambda^{-(1+ \delta_{0})} \int_{B_{2r} \backslash E_{\lambda}}  (A_{\mathbb{M}}(x,Du)  -  A_{\mathbb{M}}(x,Dw)  )\cdot Dv_{\lambda}   \, dx \,   d\lambda \\
			\notag & \hspace{5mm} \leq c \{ \varepsilon_{1} +\delta_{0} \}   \left[    \int_{ B_{2r} }    |Du|^{p-\delta_{0}} \omega  \, dx  + \int_{ B_{2r} }    |Dw|^{p-\delta_{0}}  \omega \, dx  \right] \\
			\notag   &\hspace{10mm} + c \{ c( \varepsilon_{1}) +\delta_{0} \}   \int_{ B_{2r} }   |\mathbf{f}|^{p-\delta_{0}}   \omega   \, dx   \\
			 & \hspace{5mm} \leq c \{ \varepsilon_{1}+ c( \varepsilon_{1})\delta_{0} \}  \omega(B_{2r}) \Gamma^{p-\delta_{0}}.
		\end{align}
		Note that the term \(\delta_{0}^{p-\delta_{0}}\) appearing in \eqref{intlambda}
		can be absorbed into \(\delta_{0}\).  Let's find a lower bound of $I_{1}$. By Lemma \ref{mucken}, we again obtain
		\[
		[\mathcal{M}(|Dv|)]^{-\delta_{0}}\omega \in A_p.
		\]
		Consequently, it follows that
		\begin{align}\label{iii1esti3}
			\notag &   \hspace{-4mm} I_{1}  = \int_{ B_{2r} }  \int_{\mathcal{M}(|Dv|)}^{\infty} \lambda^{-(1+\delta_{0})}(A_{\mathbb{M}}(x,Du)  -  A_{\mathbb{M}}(x,Dw)  )\cdot Dv     \,   d\lambda   \, dx\\
			\notag & = \frac{1}{\delta_{0}} \int_{B_{2r} }   [\mathcal{M}(|Dv|)]^{-\delta_{0}} (A_{\mathbb{M}}(x,Du)  -  A_{\mathbb{M}}(x,Dw)  )\cdot Dv     \,    dx \\
			\notag  & \geq \frac{\nu}{\delta_{0}} \int_{ B_{2r} }   [\mathcal{M}(|Dv|)]^{-\delta_{0}} |V(Du)-V(Dw)|^2  \omega \, dx \\
			& \geq \frac{c}{\delta_{0}} \int_{ B_{2r} }  [\mathcal{M}(|Dv|)]^{-\delta_{0}} \left\{ \mathcal{M}( |V(Du)-V(Dw)|^{\frac{2}{p}})\right\}^{p}   \omega   \, dx.
		\end{align}
	 Using \eqref{vpesti}, for any $x \in B_{2r}$ and $\tilde{\varepsilon} \in (0,1]$, we obtain
		\begin{align}\label{gbound3}
			\notag  &   \mathcal{M}(|Dv|)(x) \leq   \mathcal{M}( \left[ c( \tilde{\varepsilon} ) |V(Du)-V(Dw)|^{\frac{2}{p}}+ \tilde{\varepsilon}|Du| \right] )(x)  \\
			&  \leq  c ( \tilde{\varepsilon} )  \mathcal{M}( |V(Du)-V(Dw)|^{\frac{2}{p}}) (x)    +  c \tilde{\varepsilon} \mathcal{M}(|Du|)(x).
		\end{align}
		Considering separately the cases in which either of the last two terms in \eqref{gbound3} dominates, we obtain
		\begin{align}
			\notag  &  \left\{ \mathcal{M}( |V(Du)-V(Dw)|^{\frac{2}{p}})\right\}^{p-\delta_{0}}    \\
			\notag  & \hspace{5mm} \leq  c( \varepsilon_{2} )  [\mathcal{M}(|Dv|)(x)]^{-\delta_{0}} \left\{ \mathcal{M}( |V(Du)-V(Dw)|^{\frac{2}{p}}) \right\}^{p}    +  \varepsilon_{2} [\mathcal{M}(|Du|)]^{p-\delta_{0}}
		\end{align}
		for any $\varepsilon_{2} \in (0,1]$. Recalling \eqref{iii1esti3}, we have
		\begin{align}\label{iii2esti2}
			\notag & \hspace{-4mm} \delta_{0} I_{1}  \geq \frac{1}{ c(\varepsilon_{2})}\int_{B_{2r} }   \left\{ \mathcal{M}( |V(Du)-V(Dw)|^{\frac{2}{p}})\right\}^{p-\delta_{0}}  \omega       \, dx \\
			\notag &\hspace{10mm}  - \frac{  \varepsilon_{2} }{ c(\varepsilon_{2})}   \int_{ B_{2r} } [\mathcal{M}(|Du|)]^{p-\delta_{0}} \omega \, dx \\
			\notag  &   \geq \frac{1}{ c(\varepsilon_{2})} \int_{ B_{2r} }  |V(Du)-V(Dw)|^{2(1-\frac{\delta_{0}}{p})}  \omega      \, dx -   \frac{   \varepsilon_{2} }{ c(\varepsilon_{2})}  \int_{ B_{2r} } |Du|^{p-\delta_{0}} \omega  \, dx \\
			&   \geq \frac{1}{ c(\varepsilon_{2})} \int_{ B_{2r} }  |V(Du)-V(Dw)|^{2(1-\frac{\delta_{0}}{p})}  \omega      \, dx -   \frac{  \varepsilon_{2} }{ c(\varepsilon_{2})}  \omega(B_{2r})\Gamma^{p-\delta_{0}} .
		\end{align}
		Combining \eqref{intlambda},\eqref{iii1esti2} \eqref{iii1esti3}, and \eqref{iii2esti2}, we obtain
		\begin{align} 
			\notag  \int_{ B_{2r} }  |V(Du)-V(Dw)|^{2(1-\frac{\delta_{0}}{p})}   \omega  \, dx &\leq  (   \varepsilon_{2}+ c (\varepsilon_{2})  \varepsilon_{1} +  c (\varepsilon_{1}) c( \varepsilon_{2})\delta_{0} ) \omega(B_{2r}) \Gamma^{p-\delta_{0}}  \\
			\notag &\leq \varepsilon  \omega(B_{2r}) \Gamma^{p-\delta_{0}} 
		\end{align}
		where, for the last inequality, we have chosen $\varepsilon_{2} = \frac{\varepsilon}{3}$, $\varepsilon_{1}
		=\frac{\varepsilon}{3c (\varepsilon_{2})} $ and $ \delta_{0}
		:=\frac{1}{3}\min{\{ \frac{\varepsilon}{c (\varepsilon_{1})c (\varepsilon_{2})}
			, \frac{\delta_{2}}{2}\}}$. This completes
		the proof.
	\end{proof}

	We are now ready to establish the main result of the paper.

	\begin{proof}[Proof of Theorem \ref{main1}]  Our argument follows the approach developed in \cite{AM07}, while its implementation in the present degenerate setting requires a more delicate control of the associated weight and its interaction with the nonlinear structure of the equation. We work under the same assumptions as in Theorem \ref{main1}. 
		In particular, $\delta_{0}$ is chosen according to Lemma \ref{compari}. 
		Notice that, if $\varepsilon$ in Lemma \ref{compari} is selected depending 
		only on $n,p,\nu,L,\Lambda$ and $[\omega]_{A_p}$, then the corresponding 
		choice of $\delta_{0}$ also depends only on 
		$n,p,\nu,L,\Lambda$ and $[\omega]_{A_p}$. We fix an arbitrary ball $B_{R}(z) \subset \Omega$ and introduce the
		following upper level sets:
		\[
		E^{s}_{\Gamma}:=\left\{x\in B_{s}(z): |Du|>\Gamma\right\},
		\qquad \Gamma>0,
		\]
		for every $s$ satisfying $\frac{R}{2}\leq s\leq R$. Let
		$B_{r_{1}}(z)$ and $B_{r_{2}}(z)$ be two concentric balls such that
		\[
		\frac{R}{2}\leq r_{1}<r_{2}\leq R .
		\]
	For each point \(y\in E^{r_{1}}_{\Gamma}\), we define a continuous function
	\(\Phi_y:(0,r_{2}-r_{1}]\rightarrow [0,\infty)\) by
	\begin{align} 
		\notag \Phi_y(\rho):=
		\frac{1}{\omega(B_{\rho}(y))}
		\int_{B_{\rho}(y)}
		\left(
		|Du|^{p-\delta_{0}}
		+\left(\frac{|\mathbf{f}|}{\delta_{0}}\right)^{p-\delta_{0}}
		\right)\omega\,dx .
	\end{align}
	Consequently, by the Lebesgue differentiation theorem, the function \(\Phi_y\) satisfies
	\begin{align} 
		\notag \lim_{\rho\to0}\Phi_y(\rho)
		=
		|Du(y)|^{p-\delta_{0}} + 
		\left(\frac{|\mathbf{f}|}{\delta_{0}}\right)^{p-\delta_{0}}
		\geq |Du(y)|^{p-\delta_{0}}
		>\Gamma^{p-\delta_{0}}
	\end{align}
	for \(\omega\,dx\)-almost every \(y\in E^{r_{1}}_{\Gamma}\), where the last
	inequality follows from the definition of \(E^{r_{1}}_{\Gamma}\). Since $\omega \in A_{p}$, 
	\(\omega>0\) almost everywhere and the measures \(\omega\,dx\) and \(dx\) have the
	same null sets. Therefore, the above conclusion also holds for
	\(dx\)-almost every \(y\in E^{r_{1}}_{\Gamma}\).
		Note that if $ \frac{r_{2}-r_{1} }{10} \leq \rho \leq r_{2}-r_{1}  $, then
		\begin{align}\label{lambdazero}
			\notag \Phi_{y}(\rho) & \leq  \frac{1}{\omega(B_{\frac{r_{2}-r_{1} }{10}}(y))} \int_{B_{r_{2}}(z)}  \left\{ 
			|Du|^{p-\delta_{0}}
			 + 	\left(\frac{|\mathbf{f}|}{\delta_{0}}\right)^{p-\delta_{0}} \right\} \omega \, dx \\ 
			 & \leq  \frac{[\omega]_{A_{p}}10^{np} r_{2}^{np}}{(r_2-r_1)^{np}\omega(B_{r_{2}}(z))} \int_{B_{r_{2}}(z)}  \left\{ 
			 |Du|^{p-\delta_{0}}
			 + 	\left(\frac{|\mathbf{f}|}{\delta_{0}}\right)^{p-\delta_{0}} \right\} \omega \, dx =: \Gamma_{0}^{p-\delta_{0}},
		\end{align}
		where we used the inequality \eqref{aprhproperty}. For $\Gamma> \Gamma_{0}$, since $\Phi_{y}$ is continuous and
		$\lim\limits_{\rho \rightarrow 0}\Phi_{y}(\rho) > \Gamma^{p-\delta_{0}} $,
		there exists an exit time radius $\rho_{y} \in (0,  \frac{
			r_{2}-r_{1} }{10} )$ such that
		\begin{align}\label{exittime}
			\Phi_{y}(\rho_{y}) = \Gamma^{p-\delta_{0}} \quad \textrm{and} \quad \Phi_{y}(\rho)<\Gamma^{p-\delta_{0}} \quad \textrm{if} \enspace  \rho \in (\rho_{y},  r_{2}-r_{1} ].
		\end{align}
		We now consider the family of balls \(\{B_{\rho_{y}}(y): y\in E^{r_{1}}_{\Gamma}\}\),
	 which covers \(E^{r_{1}}_{\Gamma}\). Applying Vitali's covering
		lemma, we can select a countable collection of pairwise disjoint balls
		\(\{B_{\rho_i}(y_i): y_i\in E^{r_{1}}_{\Gamma}\}\) such that
		\begin{align}
		 \notag	E^{r_{1} }_{\Gamma} \subset \bigcup_{i\geq 1} B_{5\rho_{i}}(y_{i}) \cup \textrm{negligible set},
		\end{align}
		where we have denoted $\rho_{i}=\rho_{y_{i}}$. We write $B^{i}_{r}:= B_{5\rho_{i}}(y_{i})$ and so $B^{i}_{2r} = B_{10\rho_{i}}(y_{i})$. It follows from \eqref{exittime} that
		\begin{align}
		\notag \frac{1}{\omega(B^{i}_{2r})}\int_{B^{i}_{2r}}  |Du|^{p-\delta_{0}} \omega \, dx \leq \Gamma^{p-\delta_{0}},  \quad  \frac{1}{\omega(B^{i}_{2r})} \int_{B^{i}_{2r}}   |\mathbf{f}|^{p-\delta_{0}} \omega \, dx \leq (\delta_{0}\Gamma)^{p-\delta_{0}},
		\end{align}
		which shows that the required conditions in \eqref{intlambda} are satisfied. According to Lemma \ref{compari}, we find that for $\Gamma> \Gamma_{0}$ and for any $T \geq 1$,
			\begin{align}\label{loclevelsetesti1}
				\nonumber &  \int_{\{x \in B^{i}_{r} : |Du|>T \Gamma \}}  |Du|^{p-\delta_{0}} \omega \, dx  \leq  c \int_{\{x \in B^{i}_{r} : |Du|>T \Gamma \}}  |V(Du)|^{2(1-\frac{\delta_{0}}{p})} \omega \, dx  \\
				\nonumber &  \hspace{5mm}  \leq   c \int_{\{x \in B^{i}_{r} : |Du|>T\Gamma \}}  |V(Du)-V(Dw)|^{2(1-\frac{\delta_{0}}{p})} \omega  \, dx   \\
				\nonumber &  \hspace{10mm}  + c \int_{\{x \in B^{i}_{r} : |Du|>T\Gamma \}}  |V(Dw)|^{2(1-\frac{\delta_{0}}{p})} \omega \, dx \\
				&  \hspace{5mm}   \leq  \varepsilon  \Gamma^{p-\delta_{0}} \omega(B^{i}_{2r}) + c_{0} \int_{\{x \in B^{i}_{r} : |Du|>T\Gamma \}}  |Dw|^{p-\delta_{0}} \omega \, dx
			\end{align}
		 for some constant $c_{0}>0$ depending only on $n$ and $p$. Then the last term in the right-hand side of \eqref{loclevelsetesti1} can be estimated as
			\begin{align} 
				\notag & \int_{\{x \in B^{i}_{r} : |Du|>T\Gamma \}}  |Dw|^{p-\delta_{0}} \omega \, dx  \\
				\notag &  \leq  \omega(\{x \in B^{i}_{r} : |Du|>T\Gamma \})^{\frac{\delta_{0}+\sigma}{p+\sigma}}  \left( \int_{B^{i}_{r}} |Dw|^{p+\sigma} \omega \, dx  \right)^{\frac{p-\delta_0}{p+\sigma}} \\
				\notag &   \leq  c  \left( \int_{\{x \in B^{i}_{r} : |Du|>T \Gamma \}}  \left(\frac{|Du|}{T\Gamma}\right)^{p-\delta_{0}}  \omega \, dx \right)^{\frac{\delta_{0}+\sigma}{p+\sigma}}   \Gamma^{p-\delta_{0}}  \omega(B^{i}_{2r})^{\frac{p-\delta_{0}}{p+\sigma}} \\
				\notag &   =  c  \left( \int_{\{x \in B^{i}_{r} : |Du|>T \Gamma \}}  |Du|^{p-\delta_{0}}  \omega  \, dx \right)^{\frac{\delta_{0}+\sigma}{p+\sigma}}  \left[  T^{-\delta_{0}-\sigma} \Gamma^{p-\delta_{0}}  \omega(B^{i}_{2r}) \right]^{\frac{p-\delta_{0}}{p+\sigma}} \\
				\notag &     \leq   \frac{1}{2c_{0}} \int_{\{x \in B^{i}_{r} : |Du|>T \Gamma \}}   |Du|^{p-\delta_{0}} \omega \, dx +  c  T^{-\delta_{0}-\sigma} \Gamma^{p-\delta_{0}}  \omega(B^{i}_{2r}),
			\end{align}
			where the second inequality follows from the weighted Chebyshev inequality, while the last inequality is obtained by applying Young's inequality. Then we find
		\begin{align}\label{loclevelsetesti2}
			\nonumber &     \int_{\{x \in B^{i}_{r} : |Du|>T \Gamma \}}  |Du|^{p-\delta_{0}}  \omega \, dx \leq ( 2  \varepsilon + c^{*}  T^{-\delta_{0}-\sigma}) \Gamma^{p-\delta_{0}}  \omega(B^{i}_{2r})  \\
			& \hspace{15mm}\leq  c( \varepsilon + c^{*}  T^{-\delta_{0}-\sigma} ) \Gamma^{p-\delta_{0}}   \omega(B_{\rho_{i}}(y_{i}))   
		\end{align}
		for some $c^{*}$ depending only on $n, p, \nu, L, \Lambda$ and $ [\omega]_{A_{p}}$. Now we estimate
		\begin{align}\label{loclevelsetesti3}
			\nonumber &  \omega(B_{\rho_{i}}(y_{i})) =  \frac{1}{\Gamma^{p-\delta_{0}}}  \int_{B_{\rho_{i}}(y_{i}) } \left\{ |Du|^{p-\delta_{0}}
			+ 	\left(\frac{|\mathbf{f}|}{\delta_{0}}\right)^{p-\delta_{0}} \right\} \omega \, dx \\
			\nonumber &   \hspace{5mm}  \leq    \frac{1}{\Gamma^{p-\delta_{0}}}  \int_{\{x \in B_{\rho_{i}}(y_{i}) : |Du|> \frac{\Gamma}{4} \} }  |Du|^{p-\delta_{0}} \omega \, dx \\
			&\hspace{10mm}  + \frac{1}{\Gamma^{p-\delta_{0}}}  \int_{\{x \in B_{\rho_{i}}(y_{i}) : |\mathbf{f}|> \frac{\delta_{0}\Gamma}{4} \} }  	\left(\frac{|\mathbf{f}|}{\delta_{0}}\right)^{p-\delta_{0}} \omega  \, dx + \frac{  \omega(B_{\rho_{i}}(y_{i})) }{2}.
		\end{align}
		It follows from \eqref{loclevelsetesti2} and \eqref{loclevelsetesti3} that
		\begin{align} 
			\notag &     \int_{\{x \in B^{i}_{r} : |Du|>T \Gamma \}}  |Du|^{p-\delta_{0}}  \omega \, dx \\
			\notag & \hspace{5mm}   \leq  c ( \varepsilon + c^{*} T^{-\delta_{0}-\sigma} )  \left[  \int_{\{x \in B_{\rho_{i}}(y_{i}) : |Du|> \frac{\Gamma}{4} \} }  |Du|^{p-\delta_{0}} \omega \, dx \right. \\
			\notag &   \hspace{40mm} \left. +   \int_{\{x \in B_{\rho_{i}}(y_{i}) :|\mathbf{f}|> \frac{\delta_{0}\Gamma}{4} \} }  \left(\frac{|\mathbf{f}|}{\delta_{0}}\right)^{p-\delta_{0}} \omega   \, dx \right] .
		\end{align}
	Let \(\mathcal{E}^{s}_{\Gamma}\) denote the upper level set of
	\(|\mathbf{f}|\) given by
	\[
	\mathcal{E}^{s}_{\Gamma}=\{x\in B_s(z): |\mathbf{f}|>\Gamma\}.
	\]
	Recall that $ E^{r_1}_{T\Gamma} $ is covered by 
	the family \(\{B^i_r\}_{i=1}^{\infty}\), where
	\(\{B_{\rho_i}(y_i)\}_{i=1}^{\infty}\) is a collection of mutually disjoint
	balls. Summing the above inequality over all balls in the covering
	$\{B_r^i\}_{i=1}^\infty $, we obtain
		\begin{align} 
			\notag &    \int_{E^{r_{1}}_{T\Gamma}}  |Du|^{p-\delta_{0}} \omega \, dx  \\
			\notag &     \leq  \sum\limits_{i \geq 1}  \int_{\{x \in B^{i}_{r} : |Du|>T \Gamma \}}  |Du|^{p-\delta_{0}}  \omega \, dx  \\
			\notag &      \leq      c ( \varepsilon + c^{*} T^{-\delta_{0}-\sigma} )   \left[  \sum\limits_{i \geq 1}  \int_{\{x \in B_{\rho_{i}}(y_{i}) : |Du|> \frac{\Gamma}{4} \} }  |Du|^{p-\delta_{0}}  \omega  \, dx \right. \\
			\notag &  \hspace{40mm}  \left. +  \sum\limits_{i \geq 1}   \int_{\{x \in B_{\rho_{i}}(y_{i}) : |\mathbf{f}|> \frac{\delta_{0}\Gamma}{4} \} } \left(\frac{|\mathbf{f}|}{\delta_{0}}\right)^{p-\delta_{0}} \omega \, dx \right] \\
			\notag &      \leq   c ( \varepsilon + c^{*} T^{-\delta_{0}-\sigma} )    \left[   \int_{ \bigcup_{i\geq 1} \{x \in B_{\rho_{i}}(y_{i}) : |Du|> \frac{\Gamma}{4} \} }  |Du|^{p-\delta_{0}} \omega \, dx \right. \\
			\notag & \hspace{40mm}  \left. +     \int_{ \bigcup_{i\geq 1} \{x \in B_{\rho_{i}}(y_{i}) : |\mathbf{f}|> \frac{\delta_{0}\Gamma}{4} \} } \left(\frac{|\mathbf{f}|}{\delta_{0}}\right)^{p-\delta_{0}}  \omega  \, dx \right] \\
		 \notag&      \leq   c ( \varepsilon + c^{*} T^{-\delta_{0}-\sigma} )    \left[ \int_{E^{r_{2}}_{\Gamma/4}} |Du|^{p-\delta_{0}} \omega \, dx +   \int_{\mathcal{E}^{r_{2}}_{\delta_{0}\Gamma/4} }   \left(\frac{|\mathbf{f}|}{\delta_{0}}\right)^{p-\delta_{0}} \omega  \, dx \right]
		\end{align}
 for $\Gamma > \Gamma_{0}$. Thus, performing a change of variables in \(\Gamma\), we obtain
		\begin{align} 
			\notag  &    \int_{E^{r_{1}}_{\Gamma}} |Du|^{p-\delta_{0}} \omega \, dx     \\
			\notag&   \leq  c ( \varepsilon + c^{*} T^{-\delta_{0}-\sigma} )  \left[ \int_{E^{r_{2}}_{\Gamma/4T}} |Du|^{p-\delta_{0}} \omega \, dx +   \int_{\mathcal{E}^{r_{2}}_{\delta_{0}\Gamma/4T} }   \left(\frac{|\mathbf{f}|}{\delta_{0}}\right)^{p-\delta_{0}} \omega \, dx \right]
		\end{align}
		for $\Gamma> T\Gamma_{0}$.
		
		We now turn to the $L^\gamma$ estimates. Since the case $\gamma = p-\delta_{0}$ follows directly from Inclusion $B_{R} \Subset B_{2R}$, we henceforth consider the case where \(\gamma \in (p-\delta_0,p+\delta_0]\). For a measurable function \(g\), let
		\[
		[g]_t:=\min\{|g|,t\}
		\]
		denote its truncation at level \(t\). For \(t>t_0:=T\Gamma_0\), an application of Fubini's theorem yields
		\begin{align}\label{fubiniinteg1}
			\nonumber &    \int_{B_{r_{1}}(z)} |Du|^{p-\delta_{0}}  \left[|Du|\right]_{t}^{\gamma-p+\delta_{0}} \omega \, dx  \\
			\nonumber &= \left( \gamma-p+\delta_{0} \right)  \int_0^{t} \Gamma^{\gamma-p+\delta_{0}-1} \int_{E^{r_{1}}_{\Gamma}} |Du|^{p-\delta_{0}} \omega \, dx \, d\Gamma \\
			\nonumber & \leq  \left( \gamma-p+\delta_{0} \right) \int_0^{t_{0}} \Gamma^{\gamma-p+\delta_{0}-1} \int_{E^{r_{1}}_{\Gamma}} |Du|^{p-\delta_{0}} \omega \, dx \, d\Gamma\\
			\nonumber &  \hspace{5mm}+  \left( \gamma-p+\delta_{0} \right) \int_{t_{0}}^{t}  \Gamma^{\gamma-p+\delta_{0}-1} \int_{E^{r_{1}}_{\Gamma}} |Du|^{p-\delta_{0}} \omega \, dx \, d\Gamma \\
			\nonumber & \leq (T\Gamma_{0})^{\gamma-p+\delta_{0}}  \int_{B_{r_{2}}(z)} |Du|^{p-\delta_{0}} \omega \, dx  \\
			\nonumber &  \hspace{5mm}+  \left( \gamma-p+\delta_{0} \right) \int_{t_{0}}^{t}  \Gamma^{\gamma-p+\delta_{0}-1} \int_{E^{r_{1}}_{\Gamma}} |Du|^{p-\delta_{0}} \omega \, dx \, d\Gamma \\
			\nonumber &  \hspace{-2.5mm}  \underset{\eqref{lambdazero}}{\leq}  T^{\gamma-p+\delta_{0}} \Gamma_{0}^{\gamma}  \omega(B_{r_{2}}(z)) \\
			\nonumber &  \hspace{5mm} + c ( \varepsilon + c^{*} T^{-\delta_{0}-\sigma} ) \left( \gamma-p+\delta_{0} \right) \left[  \int_{t_0}^{t}  \Gamma^{\gamma-p+\delta_{0}-1}  \int_{E^{r_{2}}_{\Gamma/4T}} |Du|^{p-\delta_{0}} \omega \, dx  \, d\Gamma \right.\\
			&   \hspace{25mm} \left. +  \int_{t_0}^{t}  \Gamma^{\gamma-p+\delta_{0}-1} \int_{\mathcal{E}^{r_{2}}_{\delta_{0}\Gamma/4T} }   \left(\frac{|\mathbf{f}|}{\delta_{0}}\right)^{p-\delta_{0}} \omega \, dx  \, d\Gamma \right] .
		\end{align}
		To estimate the last two integrals, we again use Fubini's theorem and a change
		of variables to obtain
		\begin{align}\label{fubiniinteg2}
			\nonumber &      \int_{t_0}^{t}  \Gamma^{\gamma-p+\delta_{0}-1}  \int_{E^{r_{2}}_{\Gamma/4T}} |Du|^{p-\delta_{0}} \omega \, dx  \, d\Gamma \\
			\nonumber &    \leq \frac{(4T)^{\gamma-p+\delta_{0}}}{\gamma-p+\delta_{0}}\int_{B_{r_{2}}(z)} |Du|^{p-\delta_{0}}  \left[|Du|\right]_{t/4T}^{\gamma-p+\delta_{0}} \omega \, dx \\
			&    \leq \frac{(4T)^{\gamma-p+\delta_{0}}}{\gamma-p+\delta_{0}} \int_{B_{r_{2}}(z)} |Du|^{p-\delta_{0}}  \left[|Du|\right]_{t}^{\gamma-p+\delta_{0}} \omega \, dx .
		\end{align}
		Likewise, we have
		\begin{align}\label{fubiniinteg3}
		     \notag & \int_{t_0}^{t}  \Gamma^{\gamma-p+\delta_{0}-1} \int_{\mathcal{E}^{r_{2}}_{\delta_{0}\Gamma/4T} }   \left(\frac{|\mathbf{f}|}{\delta_{0}}\right)^{p-\delta_{0}} \omega \, dx  \, d\Gamma   \\
		     &  \leq \frac{(4T)^{\gamma-p+\delta_{0}}}{\delta_{0}^{\gamma}(\gamma-p+\delta_{0})} \int_{B_{r_{2}}(z)}|\mathbf{f}|^{\gamma} \omega \, dx .
		\end{align}
		Combining \eqref{fubiniinteg1}, \eqref{fubiniinteg2} and \eqref{fubiniinteg3}, it follows that
		\begin{align}
			\notag &    \int_{B_{r_{1}}(z)} |Du|^{p-\delta_{0}}  \left[|Du|\right]_{t}^{\gamma-p+\delta_{0}} \omega \, dx \\
			\notag &    \leq T^{\gamma-p+\delta_{0}} \Gamma_{0}^{\gamma}  \omega(B_{r_{2}}(z))  \\
			 \nonumber &  \hspace{5mm} + c_{*}( \varepsilon T^{\gamma-p+\delta_{0}} + c^{*}T^{\gamma-p-\sigma} )  \int_{B_{r_{2}}(z)} |Du|^{p-\delta_{0}}  \left[|Du|\right]_{t}^{\gamma-p+\delta_{0}} \omega \, dx   \\
			\notag &  \hspace{5mm} + \frac{ c_{*}( \varepsilon T^{\gamma-p+\delta_{0}} + c^{*}T^{\gamma-p-\sigma} )  }{\delta_{0}^{\gamma} } \int_{B_{r_{2}}(z)} |\mathbf{f}|^{\gamma} \omega  \, dx  \\
			\notag &  \leq T^{\gamma-p+\delta_{0}} \Gamma_{0}^{\gamma}  \omega(B_{r_{2}}(z))  \\
			\notag &  \hspace{5mm}  +  c_{*}( \varepsilon T^{2} + c^{*}T^{-\frac{\sigma}{2}} )  \int_{B_{r_{2}}(z)} |Du|^{p-\delta_{0}}  \left[|Du|\right]_{t}^{\gamma-p+\delta_{0}} \omega \, dx   \\
			\notag &  \hspace{5mm} +  \frac{ c_{*}( \varepsilon T^{2} + c^{*}T^{-\frac{\sigma}{2}} )  }{\delta_{0}^{p} } \int_{B_{r_{2}}(z)}  |\mathbf{f}|^{\gamma} \omega  \, dx
		\end{align}
		for some $c_{*}$ depending only on $n, p, \nu, L, \Lambda$ and $ [\omega]_{A_{p}}$, where we have used the fact that $\gamma \leq p+\delta_{0} \leq p+\frac{\sigma}{2}$. By choosing \(T\) sufficiently large and \(\varepsilon\) sufficiently small, , depending also on the doubling constant in \eqref{double}, we deduce
		\begin{align} 
			\nonumber &   \frac{1}{\omega(B_{r_{1}}(z))}\int_{B_{r_{1}}(z)} |Du|^{p-\delta_{0}}  \left[|Du|\right]_{t}^{\gamma-p+\delta_{0}} \omega  \, dx  \\
			\nonumber &  \  \leq \left(  \frac{ c R^{np}}{(r_{2}-r_{1})^{np}\omega(B_{R}(z))} \int_{B_{R}(z)}  \left\{ 
			|Du|^{p-\delta_{0}}
			+ 	\left(\frac{|\mathbf{f}|}{\delta_{0}}\right)^{p-\delta_{0}} \right\} \omega \, dx \right)^{\frac{\gamma}{p-\delta_{0}}}   \\
		\nonumber &    +   \frac{1}{2\omega(B_{r_{2}}(z))}\int_{B_{r_{2}}(z)}   |Du|^{p-\delta_{0}}  \left[|Du|\right]_{t}^{\gamma-p+\delta_{0}} \omega   \, dx  + \frac{c}{\omega(B_{R}(z))}\int_{B_{R}(z)}  |\mathbf{f}|^{\gamma} \omega  \, dx ,
		\end{align}
		where we have used the definition of $\Gamma_{0}$ in \eqref{lambdazero} and \eqref{aprhproperty}. By applying Lemma \ref{algeb} to the function
		$$
		\phi(s) := \frac{1}{\omega(B_{s})}\int_{B_{s}(z)}     |Du|^{p-\delta_{0}}  \left[|Du|\right]_{t}^{\gamma-p+\delta_{0}} \omega  \, dx
		$$
		with $\beta = \frac{np\gamma}{p-\delta_{0}} \in [n\gamma,np\gamma]$, we deduce the following estimate:
		\begin{align} 
			\notag &   \frac{1}{\omega(B_{\frac{R}{2}}(z))} \int_{B_{\frac{R}{2}}(z)}    |Du|^{p-\delta_{0}}  \left[|Du|\right]_{t}^{\gamma-p+\delta_{0}} \omega  \, dx  \\
			\notag &   \leq  \left(     \frac{c}{\omega(B_{R}(z))} \int_{B_{R}(z)}   |Du|^{p-\delta_{0}} \omega 
			\, dx \right)^{\frac{\gamma}{p-\delta_{0}}}   +    \frac{c}{\omega(B_{R}(z))}  \int_{B_{R}(z)}|\mathbf{f}|^{\gamma} \omega  \, dx  .
		\end{align}
		Letting $t \rightarrow \infty$, we obtain the desired estimate
		\begin{align} 
			\notag &   \frac{1}{\omega(B_{\frac{R}{2}}(z))} \int_{B_{\frac{R}{2}}(z)}    |Du|^{\gamma}   \omega  \, dx  \\
			\notag &   \leq  \left(     \frac{c}{\omega(B_{R}(z))} \int_{B_{R}(z)}   |Du|^{p-\delta_{0}} \omega 
			\, dx \right)^{\frac{\gamma}{p-\delta_{0}}}   +    \frac{c}{\omega(B_{R}(z))}  \int_{B_{R}(z)}|\mathbf{f}|^{\gamma} \omega  \, dx  .
		\end{align}
		Using the doubling property of \(\omega\), multiplying both sides by
		\(\omega(B_{R/2}(z))\), and performing a rescaling, we can rewrite the
		estimate in the form of Theorem  \ref{main1}.
		$$
		\int_{B_{R}} | D u |^{\gamma} \, \omega\, dx
		\le C \left[ \omega(B_{2R})^{\frac{p-\delta_0-\gamma}{p-\delta_0}}
		\left( \int_{B_{2R}} | D u |^{p-\delta_0} \, \omega\, dx \right)^{\frac{\gamma}{p-\delta_0}}
		+ \int_{B_{2R}} | \mathbf{f} |^{\gamma} \, \omega\, dx
		\right]
		$$
		for some constant $C$ depending only on $n,p, \nu,L,\Lambda$ and $[\omega]_{A_{p}}$.
	\end{proof}

\providecommand{\bysame}{\leavevmode\hbox to3em{\hrulefill}\thinspace}
\providecommand{\MR}{\relax\ifhmode\unskip\space\fi MR }
\providecommand{\MRhref}[2]{%
	\href{http://www.ams.org/mathscinet-getitem?mr=#1}{#2}
}
\providecommand{\href}[2]{#2}

\end{document}